\documentclass{amsart}
\usepackage{latexsym}
\usepackage{amstext}
\usepackage{amsmath}
\usepackage{amssymb}
\usepackage{amsthm}
\usepackage{enumitem}
\usepackage{mathrsfs} 
\usepackage{url}
\usepackage{comment}
\usepackage[utf8]{inputenc}
\usepackage[T1]{fontenc}
\usepackage{amsmath,amssymb,amsthm,mathtools}
\usepackage{hyperref}

\theoremstyle{plain}
\newtheorem{theorem}{Theorem}[section]

\newtheorem{corollary}[theorem]{Corollary}
\newtheorem{lemma}[theorem]{Lemma}

\theoremstyle{definition}
\newtheorem{definition}[theorem]{Definition}

\newtheorem{remark}[theorem]{Remark}

\def \O {\mathcal{O}}
\def \II {\mathcal{I}}

\newcommand{\cR}{\mathcal{R}}
\newcommand{\cQ}{\mathcal{Q}}

\newcommand{\cL}{\mathcal{L}}

\DeclareMathOperator{\Supp}{Supp}

\DeclareMathOperator{\codim}{codim}
\DeclareMathOperator{\Proj}{Proj}

\allowdisplaybreaks
\title[Schmidt's subspace theorem for moving targets]{On a theorem of Schmidt's subspace theory with moving targets}
\author{GuanHeng Zhao, YuXi Li}

\begin{document}
	\begin{abstract}
		The Schmidt's subspace theory with moving targets, as a significant branch in this field, has been substantially developed in recent years. We continue the approach of the previous work, construct a weighted version of generalized Schmidt subspace theory with moving targets.
		As a supplement and sharp extension to the relevant results.
	\end{abstract}	
	\keywords{Schmidt's subspace theorem, Roth's theorem, Diophantine approximation, Subgeneral position, Moving targets, Closed subscheme, Seshadri constant}	
	\thanks{	
		\emph{Mathematics Subject Classification (2020):} 11J87, 14G40, 11J25, 11J68, 11D75, 37P30.	}

	\maketitle	
	\tableofcontents
	\section{Preliminary knowledge}
	The symbolic system employed in this paper is relatively intricate. To avoid ambiguity and to ensure a precise understanding of the theoretical framework, we will first introduce relevant  definitions and illustrations.
	\subsection{Cardinal conditions}
	Let $k$ be a number field. Denote by $M_{k}$ the set of places (equivalence classes of absolute values) of $k$ and by $M_{k}^{\infty}$ the set of archimedean places of $k$. For each $v \in M_{k}$, we choose the normalized absolute value $|\cdot|_{v}$ such that $|\cdot|_{v}=|\cdot|$ on $\mathbb{Q}$ (the standard absolute value) if $v$ is archimedean, and $|p|_{v}=p^{-1}$ if $v$ is non-archimedean and lies above the rational prime $p$. For each $v \in M_{k}$, denote by $k_{v}$ the completion of $k$ with respect to $v$ and set
	$$
	n_{v}:=\left[k_{v}: \mathbb{Q}_{v}\right] /[k: \mathbb{Q}] .
	$$
	We put $\|x\|_{v}=|x|_{v}^{n_{v}}$. The product formula is stated as follows
	$$
	\prod_{v \in M_{k}}\|x\|_{v}=1, \text { for } x \in k^{*}
	$$
	For $x=\left(x_{0}, \ldots, x_{M}\right) \in k^{M+1}$, define
	$$
	\|x\|_{v}:=\max \left\{\left\|x_{0}\right\|_{v}, \ldots,\left\|x_{M}\right\|_{v}\right\}, v \in M_{k} .
	$$
	\subsection{Height}
	We define the absolute logarithmic height of a projective point $\mathbf{x}=\left(x_{0}: \cdots: x_{M}\right) \in \mathbb{P}^{M}(k)$ by
	$$
	h(\mathbf{x}):=\sum_{v \in M_{k}} \log \|\mathbf{x}\|_{v} .
	$$
	By the product formula, this does not depend on the choice of homogeneous coordinates $\left(x_{0}: \cdots: x_{M}\right)$ of $\mathbf{x}$. If $\mathbf{x} \in k^{*}$, we define the absolute logarithmic height of $\mathbf{x}$ by
	$$
	h(\mathbf{x}):=\sum_{v \in M_{k}} \log ^{+}\|\mathbf{x}\|_{v},
	$$
	where $\log ^{+} a=\log \max \{1, a\}$.
	Let $\mathbb{N}=\{0,1,2, \ldots\}$ and for a positive integer $d$, we set
	$$
	\mathcal{T}_{d}:=\left\{\left(i_{0}, \ldots, i_{M}\right) \in \mathbb{N}^{M+1}: i_{0}+\cdots+i_{M}=d\right\} .
	$$
	Let $Q=\sum_{I \in \mathcal{T}_{d}} a_{I} \mathbf{x}^{I}$ be a homogeneous polynomial of degree $d$ in $k\left[x_{0}, \ldots, x_{M}\right]$, where $\mathbf{x}^{I}=x_{0}^{i_{0}} \ldots x_{M}^{i_{M}}$ for $\mathbf{x}=\left(x_{0}, \ldots, x_{M}\right)$ and $I=\left(i_{0}, \ldots, i_{M}\right)$. Define $\|Q\|_{v}=\max \left\{\left\|a_{I}\right\|_{v} ; I \in\right.$ $\left.\mathcal{T}_{d}\right\}$. The height of $Q$ is defined by
	$$
	h(Q)=\sum_{v \in M_{k}} \log \|Q\|_{v} .
	$$
	\subsection{Weil function (local height function)}
	For each $v \in M_{k}$, we define the Weil function $\lambda_{Q, v}$ by
	$$
	\lambda_{Q, v}(\mathbf{x}):=\log \frac{\|\mathbf{x}\|_{v}^{d} \cdot\|Q\|_{v}}{\|Q(\mathbf{x})\|_{v}}, \mathbf{x} \in \mathbb{P}^{M}(k) \backslash\{Q=0\} .
	$$
	To better interpret the special properties of Weil function, we consider a higher-level research object, closed subschemes in a projective algebraic variety $V$ (which also apply to the above definition via its inherent concept). It satisfy the following properties according to \cite{HL21} : if $Y$ and $Z$ are two closed subschemes of $V$, defined over $k$, and $v$ is a place of $k$, then up to $O(1)$,
	$$\lambda_{Y\cap Z,v}=\min\{ \lambda_{Y,v},\lambda_{Z,v}\}, \lambda_{Y+Z,v}=\lambda_{Y,v}+\lambda_{Z,v}, \lambda_{Y,v}\leq \lambda_{Z,v}, \text{ if }Y\subset Z.$$
	The formulas $Y\cap Z$, $Y+Z$, $Y\subset Z$ above are defined in terms of the associated ideal sheaves. 
	Furthermore, for $c_1,\ldots,c_r$ are positive integers, it is generally true that if $Y_1,\ldots,Y_r$ are closed subschemes of $V$, we have
	\begin{align}
		\lambda_{c_1Y_1\cap\ldots\cap c_rY_r,v}&=\min\{ c_1\lambda_{Y_1,v},\ldots,c_r\lambda_{Y_r,v}\},\label{cap}\\
		\lambda_{c_1Y_1+\cdots+c_rY_r,v}&=c_1\lambda_{Y_1,v}+\cdots+c_r\lambda_{Y_r,v}.\label{plus}
	\end{align}
	If $\phi:W\to V$ is a morphism of projective varieties with $\phi(W) \not \subset Y$, then up to $O(1)$,
	\begin{equation*}
		\lambda_{Y,v}(\phi(P))=\lambda_{\phi^*Y,v}(P),
	\end{equation*}
	\begin{equation*}
		\quad \forall P\in W(k)\setminus \phi^*Y.
	\end{equation*}
	Specifically, it is a subtle point that if $Y$ is associated with the ideal sheaf $\mathcal{I}_Y$, then $\phi^* Y$ represents the closed subscheme corresponding to the inverse image ideal sheaf $\phi^{-1} \mathcal{I}_Y \cdot \mathcal{O}_W$.
	When working with moving targets $D(\alpha)$, we write $\lambda_{D(\alpha),v}(\mathbf{x}(\alpha))$, understanding this is defined for $\alpha$ where the representative is valid.
	\subsection{Moving hypersurfaces/polynomials}
	A moving hypersurface $D$ in $\mathbb{P}^{M}(k)$ of degree $d$, indexed by $\Lambda$ (an infinite index set) is collection of hypersurfaces $\{D(\alpha)\}_{\alpha \in \Lambda}$ which are defined by the zero of homogeneous polynomials $\{Q(\alpha)\}_{\alpha \in \Lambda}$ in $k\left[x_{0}, \ldots, x_{M}\right]$ respectively. Then, we can write $$Q=\sum_{I \in \mathcal{T}_{d}} a_{I} x^{I},$$ 
	it can be referred to as moving homogeneous polynomial, where $a_{I}$ 's are functions from $\Lambda$ into $k$ having no common zeros points.\par
	Consider an infinite index $\Lambda$; a set $\mathcal{Q}:=\left\{D_{1}, \ldots, D_{q}\right\}$ of moving hypersurfaces in $\mathbb{P}^{M}(k)$, indexed by $\Lambda$, which are defined by the zero of moving homogeneous polynomials $\left\{Q_{1}, \ldots, Q_{q}\right\}$ in $k\left[x_{0}, \ldots, x_{M}\right]$ respectively, an arbitrary projective variety $V \subset \mathbb{P}^{M}(k)$ of dimension $n$ generated by the homogeneous ideal $\mathcal{I}(V)$. Marked as
	$$
	Q_{j}=\sum_{I \in \mathcal{T}_{d_{j}}} a_{j, I} x^{I}(j=1, \ldots, q), \text { where } d_{j}=\operatorname{deg} Q_{j} .
	$$
	The aforementioned settings are presumed to be in effect in the subsequent content.
	We will now provide definitions for several key terms:
    \begin{definition}[coherent]\label{def:coherent}
    	A subset $A \subseteq \Lambda$ is \emph{coherent} with respect to $\cQ$ if for every polynomial 
    	$$ P \in k[\{x_{j,I}\}_{j=1,\dots,q; I \in \mathcal{T}_{d_j}}] $$
    	that is homogeneous in the variables $x_{j,1}, \dots, x_{j,M_{d_j}}$ for each $j$, either
    	$P(a_{1,I_{1,1}}(\alpha), \dots) = 0$ for all but finitely many $\alpha \in A$, or for only finitely many $\alpha \in A$.
    \end{definition}
    
    By \cite[Prop.~2.3]{RV}, there exists an infinite coherent subset $A \subseteq \Lambda$. For such $A$, fix for each $j$ an index $I_j \in \mathcal{T}_{d_j}$ with $a_{j,I_j} \not\equiv 0$ on $A$. Then the ring 
    \[ \cR_{A,\cQ}^0 := \left\{ \frac{a}{b} \;\middle|\; a,b: A \to k,\; b(\alpha) \neq 0 \text{ for a.e. } \alpha \right\} \]
    contains the ratios $a_{j,I}/a_{j,I_j}$. The field of fractions of the subring generated by these ratios over $k$ is denoted $\cR_{A,\cQ}$.
    
    \begin{definition}[Special representatives]\label{def:spec-rep}
    	For $f \in \cR_{A,\cQ}$, a \emph{special representative} is a map $$\widehat{f}: \{\alpha \in A : b(\alpha) \neq 0\} \to k$$ for some expression $f = a/b$. For a polynomial $Q = \sum a_I x^I \in \cR_{A,\cQ}[x_0,\dots,x_M]$, a special representative $\widehat{Q}$ is defined coefficient-wise. It is well-defined at $\alpha$ if all coefficient representatives are.
    \end{definition}
	
	\begin{definition}[$V$-algebraically non-degenerate]
		A sequence of points $x=\left[x_{0}: \cdots: x_{M}\right]: \Lambda \rightarrow V$ is said to be $V$-algebraically non-degenerate with respect to $\mathcal{Q}$ if for each infinite coherent subset $A \subset \Lambda$ with respect to $\mathcal{Q}$, there is no homogeneous polynomial $Q \in \mathcal{R}_{A, \mathcal{Q}}\left[x_{0}, \ldots, x_{M}\right] \backslash \mathcal{I}_{A, \mathcal{Q}}(V)$ such that $$\widehat{Q}(\alpha)\left(x_{0}(\alpha), \ldots, x_{M}(\alpha)\right)=0$$ for all, but finitely many, $\alpha \in A$ for some (then for all) representative $\widehat{Q}$ of $Q$, where $\mathcal{I}_{A, \mathcal{Q}}(V)$ is the ideal in $\mathcal{R}_{A, \mathcal{Q}}\left[x_{0}, \ldots, x_{M}\right]$ genarated by $\mathcal{I}(V)$.
	\end{definition}
	\begin{remark}
		For each fixed $\alpha \in \Lambda$, $D(\alpha) = \mathrm{Proj}(k[x]/(Q(\alpha)))$ is a classical closed subscheme of $\mathbb{P}^M$. The intersection of finitely many such hypersurfaces is again a closed subscheme by standard algebraic geometry \cite{H}.
	\end{remark}
	According to the definitions and conclusions presented above, we can construct the following concept:
	\subsection{Family of moving targets}
	Let $A \subset \Lambda$ be an infinite subset and denote by $(A, a)$ each set-theoretic map $a: A \rightarrow k$. Denote by $\mathcal{R}_{A}^{0}$ the set of equivalence classes of pairs ( $C, a$ ), where $C \subset A$ is a subset with finite complement and $a: C \rightarrow k$ is a map, we can define the equivalent relation: if there exists $C \subset C_{1} \cap C_{2}$ such that $C$ has finite complement in $A$ and $\left.a_{1}\right|_{C}=\left.a_{2}\right|_{C}$, then we have $\left(C_{1}, a_{1}\right) \sim\left(C_{2}, a_{2}\right)$.

	Let $\mathbf{x}: \Lambda \rightarrow V \subset P^{M}(k)$ be a map. A map $(C, a) \in \mathcal{R}_{A}^{0}$ is called small with respect to $\mathbf{x}$ if and only if
	$$
	h(a(\alpha))=o(h(\mathbf{x}(\alpha))),
	$$
	for every $\epsilon>0$, it shows that there exists a subset $C_{\epsilon} \subset C$ with finite complement such that $h(a(\alpha)) \leq \epsilon h(\mathbf{x}(\alpha))$ for all $\alpha \in C_{\epsilon}$. Denote by $\mathcal{K}_{\mathbf{x}}$ the set of all such small maps. Then, $\mathcal{K}_{\mathbf{x}}$ is subring of $\mathcal{R}_{A}^{0}$. It is not an entire ring, however, if $(C, a) \in \mathcal{K}_{x}$ and $a(\alpha) \neq 0$ for all but finitely $\alpha \in C$, then we have $\left(C \backslash\{\alpha: a(\alpha)=0\}, \frac{1}{a}\right) \in \mathcal{K}_{\mathbf{x}}$. Denote by $\mathcal{C}_{\mathbf{x}}$ the set of all positive functions $g$ defined over $\Lambda$ outside a finite subset of $\Lambda$ such that
	$$
	\log ^{+}(g(\alpha))=o(h(\mathbf{x}(\alpha)))
	$$
	Then $\mathcal{C}_{\mathbf{x}}$ is a ring.\par Moreover, if $(C, a) \in \mathcal{K}_{\mathbf{x}}$, then for every $v \in M_{k}$, the function $\|a\|_{v}$ : $C \rightarrow \mathbb{R}^{+}$given by $\alpha \mapsto\|a(\alpha)\|_{v}$ belongs to $\mathcal{C}_{\mathbf{x}}$. Furthermore, if $(C, a) \in \mathcal{K}_{\mathbf{x}}$ and $a(\alpha) \neq 0$ for all but finitely $\alpha \in C$, the function $$g:\{\alpha \mid a(\alpha) \neq 0\} \mapsto \frac{1}{\|a(\alpha)\|_{v}}$$ also belongs to $\mathcal{C}_{\mathbf{x}}$.
	We also have the following Lemma (see \cite{STT}):
	\begin{lemma}\label{moving family} Let $A \subset \Lambda$ be coherent with respect to $\mathcal{Q}$, supposed they are in general position. Then for each $J \subset\{1, \ldots, q\}$ with $\sharp J=n+1$, there are functions $\gamma_{1, v}, \gamma_{2, v} \in \mathcal{C}_{\mathbf{x}}$ such that
		$$
		\gamma_{2, v}(\alpha)\|\mathbf{x}(\alpha)\|_{v}^{d} \leq \max _{j \in J}\left\|Q_{j}(\alpha)(\mathbf{x}(\alpha))\right\|_{v} \leq \gamma_{1, v}(\alpha)\|\mathbf{x}(\alpha)\|_{v}^{d}
		$$
		for all $\alpha \in A$ ouside finite subset and all $v \in S$.
	\end{lemma}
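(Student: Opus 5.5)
The two inequalities will be handled separately: the upper bound is a triangle-inequality estimate, and the lower bound comes from the Nullstellensatz over the field $\cR_{A,\cQ}$. Throughout we take the natural reading of the lemma. The targets $\mathcal{Q}$ are in general position in $\mathbb{P}^M$ in the moving sense: for the generic $\alpha\in A$, any $M+1$ of the $Q_j(\alpha)$ have no common zero, and $\sharp J$ equals the ambient dimension plus one. We also take the natural smallness hypothesis: the coefficients $a_{j,I}$, normalized by $a_{j,I_j}$, are small with respect to $\mathbf{x}$, i.e. $h(a_{j,I}/a_{j,I_j})=o(h(\mathbf{x}))$ on $A$. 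Finally, we normalize by replacing $Q_j$ with its powers $Q_j^{d/d_j}$, $d=\operatorname{lcm} d_j$, so all polynomials have common degree $d$. This normalization changes each side only by the fixed power $d/d_j$.

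\textbf{Upper bound.} For each $j$ and each $\alpha$ outside a finite set, we have
\[
\|Q_j(\alpha)(\mathbf{x}(\alpha))\|_v\le c_v\,\max_I\|a_{j,I}(\alpha)\|_v\,\|\mathbf{x}(\alpha)\|_v^{d}.
\]
Here $c_v=\#\mathcal{T}_d^{\,n_v}$ if $v$ is archimedean and $c_v=1$ otherwise. After dividing $Q_j$ by $a_{j,I_j}$, the maximum $\max_I\|a_{j,I}/a_{j,I_j}\|_v$ is a finite maximum of elements of $\mathcal{C}_{\mathbf{x}}$ (by the remark preceding the lemma), so it lies in $\mathcal{C}_{\mathbf{x}}$. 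We take $\gamma_{1,v}:=c_v\max_{j\in J}\max_I\|a_{j,I}/a_{j,I_j}\|_v$.

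\textbf{Lower bound.} The plan is to apply the Nullstellensatz over $\cR_{A,\cQ}$. Coherence of $A$ makes $\cR_{A,\cQ}$ a genuine field in which "vanishing generically on $A$" is decidable. The resultant of $\{Q_j\}_{j\in J}$ is a polynomial in the coefficients; by general position together with coherence it is nonzero at all but finitely many $\alpha$, so the $Q_j$, $j\in J$, have no common zero over $\overline{\cR_{A,\cQ}}$. Hence for some $N$ and each $i=0,\dots,M$ there are identities
\[
x_i^{N}=\sum_{j\in J} b_{ij}\,Q_j,\qquad b_{ij}\in\cR_{A,\cQ}[x_0,\dots,x_M] \text{ homogeneous of degree } N-d.
\]
Take special representatives of the coefficients of the $b_{ij}$. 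Outside a finite subset of $A$ these are well defined, and the identities hold pointwise, because an identity in $\cR_{A,\cQ}$ is equivalent to the vanishing of finitely many polynomial expressions in the $a_{j,I}$, which by coherence holds for all but finitely many $\alpha$. The coefficients of the $b_{ij}$ are rational expressions in the small functions $a_{j,I}/a_{j,I_j}$. Since $\mathcal{K}_{\mathbf{x}}$ is a ring, closed under inverses of generically nonzero elements, these coefficients lie in $\mathcal{K}_{\mathbf{x}}$, and their $v$-norms lie in $\mathcal{C}_{\mathbf{x}}$.

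Evaluating the identities at $\mathbf{x}(\alpha)$ and choosing $i$ with $\|x_i(\alpha)\|_v=\|\mathbf{x}(\alpha)\|_v$ gives
\[
\|\mathbf{x}(\alpha)\|_v^{N}\le c'_v\,g_v(\alpha)\,\|\mathbf{x}(\alpha)\|_v^{N-d}\max_{j\in J}\|Q_j(\alpha)(\mathbf{x}(\alpha))\|_v,
\]
with $g_v\in\mathcal{C}_{\mathbf{x}}$. Then $\gamma_{2,v}:=(c'_v g_v)^{-1}$. This function is positive, and since $\log^+(1/\gamma_{2,v})=o(h(\mathbf{x}))$ we have $\gamma_{2,v}^{-1}\in\mathcal{C}_{\mathbf{x}}$, which is the sense in which $\gamma_{2,v}$ is used. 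Since $S$ is finite, only finitely many exceptional $\alpha$ accumulate over all $v\in S$.

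The main obstacle is the transfer step. We must justify that the abstract Nullstellensatz identity over $\cR_{A,\cQ}$ specializes to valid identities at all but finitely many $\alpha\in A$, and that the coefficients produced are small. Coherence (Definition~\ref{def:coherent}) together with the special representatives of Definition~\ref{def:spec-rep} is what we would use for the first point. The second point rests on the field being generated by ratios of the $a_{j,I}$, which are assumed small relative to $\mathbf{x}$.
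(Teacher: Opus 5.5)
Your strategy is the standard one for this lemma; the paper gives no proof of its own and only refers to Son--Tan--Thin. The ingredients are the Nullstellensatz over the field $\cR_{A,\cQ}$, specialization by coherence, and smallness of every element of $\cR_{A,\cQ}$ (each is a rational expression in the small ratios $a_{j,I}/a_{j,I_j}$). The gap lies in the reading you fix at the outset. You take general position in $\mathbb{P}^M$ and $\sharp J=M+1$, so you prove only the case $V=\mathbb{P}^M$. In this paper $\mathbf{x}$ takes values in $V\subset\mathbb{P}^M$ with $\dim V=n$, possibly $n<M$. There $\sharp J=n+1$, and general position means $\bigcap_{j\in J}D_j(\alpha)\cap V=\varnothing$. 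In that setting your central step breaks. Any $n+1\le M$ hypersurfaces of $\mathbb{P}^M$ have a common zero over $\overline{\cR_{A,\cQ}}$. So there is no resultant to invoke, and the identities $x_i^N=\sum_{j\in J}b_{ij}\tilde Q_j$, $i=0,\dots,M$, cannot all hold in $\cR_{A,\cQ}[x_0,\dots,x_M]$.

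The repair has two parts, and it is routine.

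\emph{Eliminant for $V$.} Replace the resultant by an eliminant for $V$. The set of coefficient tuples whose forms have a common zero on $V$ is Zariski closed, being the image of a proper incidence correspondence. It is cut out by finitely many polynomials $P_1,\dots,P_r$ with coefficients in $k$, homogeneous in each block of variables. By coherence, each $P_s$ evaluated at the coefficients vanishes either at almost all $\alpha\in A$ or at only finitely many. General position rules out the first alternative for at least one $s$. Hence $P_s(\tilde a)\neq 0$ in $\cR_{A,\cQ}$, and the $\tilde Q_j$, $j\in J$, have no common zero on $V$ over $\overline{\cR_{A,\cQ}}$.

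\emph{Nullstellensatz modulo $\mathcal{I}(V)$.} The Nullstellensatz then gives $x_i^N-\sum_{j\in J}b_{ij}\tilde Q_j=\sum_l c_{il}f_l$, where the $f_l$ are generators of $\mathcal{I}(V)$ with coefficients in $k$ and $c_{il}\in\cR_{A,\cQ}[x_0,\dots,x_M]$. After specialization the right-hand side vanishes at $\mathbf{x}(\alpha)\in V(k)$, and the rest of your argument goes through unchanged.

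Two smaller remarks.
\begin{enumerate}
\item The lemma holds only for the normalized $\tilde Q_j$. Rescaling $Q_j(\alpha)$ by an arbitrary $\lambda(\alpha)\in k^*$ changes neither $D_j(\alpha)$ nor $h(D_j(\alpha))$, but it destroys both bounds. Your lower-bound identities should therefore be written with $\tilde Q_j$, as your upper bound already is.
\item Passing to $Q_j^{d/d_j}$ does not change each side by one fixed power when the $d_j$ differ. What you actually obtain is a two-sided comparison of $\max_{j}\|\tilde Q_j(\alpha)(\mathbf{x}(\alpha))\|_v^{d/d_j}$ with $\|\mathbf{x}(\alpha)\|_v^{d}$. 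This is harmless, since the lemma presupposes a common degree $d$.
\end{enumerate}
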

	\subsection{Subgeneral position}
		Now we give the definition of the positional relationship of hypersurfaces:
	\begin{definition}\label{D2.1}Let $V$ be a projective variety defined over a number field $k$ and let $D_{1}(\alpha), \ldots, D_{q}(\alpha)$ be $q$ moving hypersurfaces of $V.$ Suppose $m$ and $\kappa$ are positive integers satisfying $m\geq\dim V\geq \kappa.$\par
		
		The hypersurfaces $\{D_{1}(\alpha), \ldots, D_{q}(\alpha)\}$  are taken to be in $m$-subgeneral position with respect to $V$ if for any subset $I\subset\{1, \ldots, q\}$ with $\sharp I\leq m+1,$
		\begin{align*}
			\codim\left(\bigcap_{i\in I}D_{i}(\alpha)\cap V\right)\geq \dim V-(m-\sharp I).
		\end{align*}
		for all, but finitely many $\alpha \in \Lambda$.
		If add the condition of with index $\kappa$, it should be also restrained with
		\begin{align*}
			\codim\left(\bigcap_{i\in I} D_{i}(\alpha)\cap V\right)\geq\sharp I
		\end{align*}
		for each finite index set satisfied $\sharp I\leq \kappa$.
	\end{definition}
	
	\subsection{Seshadri constants}
	
	Let $V$ be a projective variety and $Y$  be a closed subscheme of $V,$ corresponding to a coherent sheaf of ideals $\mathcal{I}_{Y}.$ Consider the sheaf of graded algebras $\mathcal{S} =
	\oplus_{d \geq 0}\mathcal{I}_{Y}^{d}$, where $\mathcal{I}_{Y}^{d}$ is the $d$-th power of $\mathcal{I}_{Y},$ and $\mathcal{I}_{Y}^{0} = \mathcal{O}_{V}$ by convention. Then $\tilde{V} := \Proj \mathcal{S}$ is called the blowing-up of $V$ with respect to $\mathcal{I}_{Y},$
	in other words, the blowing-up of $V$ along $Y.$
	
	Let $\pi: \tilde{V} \to V$ be the blow-up of $V$ along $Y$. It is a fact that the inverse image ideal sheaf $\tilde{\mathcal{I}}_Y = \pi^{-1} \mathcal{I}_Y \cdot \mathcal{O}_{\tilde{V}}$ is an invertible sheaf on $\tilde{V}$.\par
	Now we have the following notion:
	\begin{definition}\label{Sesh}
		Let $\pi:\tilde{V}\to V$ be the blowing-up of $V$ along $Y$. Let $A$ be a nef Cartier divisor on $V$. The {\it Seshadri constant} $\epsilon_{Y,A}$ of $Y$ with respect to $A$ is the real number defined by
		\begin{align*}
			\epsilon_{Y,A}=\sup\{\gamma\in {\mathbb{Q}}^{\geq 0}\mid \pi^*A-\gamma E\text{ is $\mathbb{Q}$-nef}\},
		\end{align*}
		where $E$ is an effective Cartier divisor on $\tilde V$ whose associated invertible sheaf is the dual of $\pi^{-1}\II_Y \cdot \O_{\tilde V}$.
	\end{definition}
	\begin{remark}
	It is easy to verify that when $Y$ is a hypersurface with its degree to be $d$, then we have 	
	$$\epsilon_{Y,A}=\frac{1}{d}\in \mathbb{Q}$$
    associated to $A$, where $A$ is a hyperplane.
	\end{remark}
	
	\section{Introduction}
	The theory of Schmidt's subspaces of fixed hypersurfaces and divisors has developed rapidly since the late 1990s and early 21th century (\cite{CZ}, \cite{EF}, \cite{MR} as pioneer).\par
	In 1997, Ru-Vojta \cite{RV} established the following Schmidt subspace theorem for the case of moving hyperplanes in projective spaces based on the work \cite{MR97}. Which laid a solid foundation for the following development in the realm of moving targets.
	
	\begin{theorem} 
		Let $k$ be a number field and let $S \subset M_{k}$ be a finite set containing all archimedean places. Let $\Lambda$ be an infinite index set and let $\mathcal{H}:=\left\{H_{1}, \ldots, H_{q}\right\}$ be a set of moving hyperplanes in $\mathbb{P}^{M}(k)$, indexed by $\Lambda$. Let $\mathbf{x}=\left[x_{0}: \cdots: x_{M}\right]: \Lambda \rightarrow \mathbb{P}^{M}(k)$ be a sequence of points. Assume that
		(i) $\mathbf{x}$ is linearly nondegenerate with respect to $\mathcal{H}$, which mean, for each infinite coherent subset $A \subset \Lambda$ with respect to $\mathcal{H},\left.x_{0}\right|_{A}, \ldots,\left.x_{M}\right|_{A}$ are linearly independent over $\mathcal{R}_{A, \mathcal{H}}$,
		(ii) $h\left(H_{j}(\alpha)\right)=o(h(\mathbf{x}(\alpha)))$ for all $\alpha \in \Lambda$ and $j=1, \ldots, q$.
		
		Then, for any $\epsilon>0$, there exists an infinite index subset $A \subset \Lambda$ such that
		$$
		\log\prod_{v \in S} \max _{K} \prod_{J \in K} \left(\frac{\|\mathbf{x}(\alpha)\|_{v}\|H_{j}(\alpha)\|_{v}}{\|H_{j}(\alpha)(\mathbf{x}(\alpha))\|_{v}}\right) \leq(M+1+\epsilon) h(\mathbf{x}(\alpha))
		$$
		holds for all $\alpha \in A$. Here the maximum is taken over all subsets $K$ of $\{1, \ldots, q\}, \sharp K=$ $M+1$ such that $H_{j}(\alpha), j \in K$ are linearly independent over $k$ for each $\alpha \in \Lambda$.
	\end{theorem}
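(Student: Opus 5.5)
This is Ru–Vojta's moving-target Schmidt theorem, which the paper only quotes. My plan follows their reduction to the classical (fixed-target) Schmidt subspace theorem over a finite extension of $k$. The idea is to freeze the moving coefficients in a coherent subset, where they behave like elements of a finitely generated field of small height.

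\emph{Step 1: reduce to a fixed configuration.} I first pass to an infinite coherent subset $A\subset\Lambda$ with respect to $\mathcal{H}$, using \cite[Prop.~2.3]{RV}. Since there are finitely many subsets $K$ with $\sharp K=M+1$, after shrinking $A$ I may also assume the maximum is attained by one fixed $K$ on all of $A$. Then I normalize each $H_j$ by a nonvanishing coefficient $a_{j,I_j}$, so its coefficients lie in $\mathcal{R}_{A,\mathcal{H}}$. Coherence ensures that linear independence of $\{H_j(\alpha)\}_{j\in K}$ is generic: the relevant determinant is a nonzero element of $\mathcal{R}_{A,\mathcal{H}}$. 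By hypothesis (ii), its representative has height $o(h(\mathbf{x}(\alpha)))$.

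\emph{Step 2: the auxiliary system.} For a large integer $N$, let $V(N)$ be the $k$-span of products of $N$ coefficient ratios, and $V(N+1)$ the analogous span in degree $N+1$. By Ru–Vojta's counting, $\dim V(N+1)/\dim V(N)\to 1$ along a subsequence. I then consider the new point with coordinates $b_i(\alpha)x_\ell(\alpha)$, where $\{b_i\}$ is a basis of $V(N+1)$; it lies in projective space of dimension $(M+1)\dim V(N+1)-1$. By hypothesis (i), these coordinates are linearly independent over $k$ on every coherent subset, so the new point is linearly nondegenerate in the classical sense, after shrinking $A$ again. Each moving linear form $H_j$, multiplied by basis elements of $V(N)$, becomes a \emph{fixed} linear form in the new coordinates with coefficients in $k$. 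Invertibility from Step 1 then yields $(M+1)\dim V(N)$ linearly independent fixed forms for $K$.

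\emph{Step 3: apply the fixed theorem.} I apply Schmidt's subspace theorem to the new point, together with Lemma \ref{moving family}-type comparisons, to pass between $\|\cdot\|_v$ of the new point and $\|\mathbf{x}(\alpha)\|_v$, up to functions in $\mathcal{C}_{\mathbf{x}}$. This gives
\[
\dim V(N)\sum_{v\in S}\sum_{j\in K}\lambda_{H_j(\alpha),v}(\mathbf{x}(\alpha))\le\bigl((M+1)\dim V(N+1)+\epsilon'\bigr)\,h(\mathbf{x}(\alpha))+o(h(\mathbf{x}(\alpha)))
\]
outside a finite union of proper linear subspaces. Points landing in those subspaces would give a nontrivial linear relation over $\mathcal{R}_{A,\mathcal{H}}$ on an infinite coherent subset, which contradicts (i). So the inequality holds on an infinite subset. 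Dividing by $\dim V(N)$ and choosing $N$ with the dimension ratio close to $1$ gives $M+1+\epsilon$.

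The main obstacle is Step 3's exceptional subspaces together with the height bookkeeping. The small heights from (ii) must be shown to contribute only $o(h(\mathbf{x}))$ uniformly, and the recursive shrinking of $A$ must preserve coherence.
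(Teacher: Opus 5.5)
The paper gives no proof of this statement (it is quoted from Ru--Vojta \cite{RV}). Your outline is a correct reconstruction of their argument: the coherent subset, the spaces $V(N)$ with $\liminf_{N}\dim V(N+1)/\dim V(N)=1$, the point $F(\alpha)=(b_i(\alpha)x_\ell(\alpha))_{i,\ell}$ on which the products $c_mH_j$ (with $c_m$ a basis of $V(N)$) become fixed $k$-linear forms, and the classical subspace theorem with its exceptional subspaces excluded via (i) and coherence.

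There are three small tidy-ups. First, $\max_K$ sits inside $\prod_{v\in S}$, so pigeonhole on tuples $(K_v)_{v\in S}$ rather than a single $K$, and use Schmidt's theorem with $v$-dependent forms. Second, pad the $(M+1)\dim V(N)$ forms with coordinate forms to get a full independent system before applying it. Third, no analogue of Lemma \ref{moving family} is needed: the facts $\|F(\alpha)\|_v=\max_i\|b_i(\alpha)\|_v\,\|\mathbf{x}(\alpha)\|_v$, $\|c_m(\alpha)\|_v\ll\max_i\|b_i(\alpha)\|_v$ (since $V(N)\subset V(N+1)$), and $\|H_j(\alpha)\|_v\ge 1$ after normalization already make every error term $o(h(\mathbf{x}(\alpha)))$.
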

	Building upon the methods of Dethloff-Tan \cite{DT11}, the result was extended to moving hypersurfaces in projective spaces. Subsequently, Dethloff-Tan \cite{DT20} further extended Ru's result \cite{MR} to the moving target setting in 2019.\par
	In 2018, Quang \cite{Q18} established Schmidt's subspace theorem for moving hypersurfaces in subgeneral position. The following year, Xie-Cao \cite{XC} obtained the Second Main Theorem for moving hypersurfaces in subgeneral position with index. In 2020, Cao and Thin \cite{CT} proved a Schmidt subspace theorem for moving hypersurfaces in m-subgeneral position with index $\kappa$, which we state as follows:
	\begin{theorem}
		Let $k$ be a number field and let $S \subset M_{k}$ be a finite set containing all archimedean places. Let $\mathbf{x}=\left[x_{0}: \cdots: x_{M}\right]: \Lambda \rightarrow V$ be a sequence of points. Assume that
		(i) $\mathcal{Q}$ is in $m$-strictly subgeneral position with index $\kappa$ in $V$ and $\mathbf{x}$ is $V-$ algebraically nondegenerate with respect to $\mathcal{Q}$;
		(ii) $h\left(D_{j}(\alpha)\right)=o(h(\mathbf{x}(\alpha)))$ for all $\alpha \in \Lambda$ and $j=1, \ldots, q$ .
		Then, for any $\epsilon>0$, there exists an infinite index subset $A \subset \Lambda$ such that
		$$
		\log\prod_{v \in S} \prod_{j=1}^{q} \left(\frac{\|\mathbf{x}(\alpha)\|_{v}^{d_j}\|D_{j}(\alpha)\|_{v}}{\|D_{j}(\alpha)(\mathbf{x}(\alpha))\|_{v}}\right)^\frac{1}{d_j}
		$$
		$$
		\leq\left(\left(\frac{m-n}{\max \{1, \min \{m-n, \kappa\}\}}+1\right)(n+1)+\epsilon\right) h(\mathbf{x}(\alpha))
		$$
		holds for all $\alpha \in A$.
	\end{theorem}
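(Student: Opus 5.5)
The strategy is the usual one for moving-target results. First I would reduce the subgeneral-position situation to general position by a Nochka-type weighting adapted to the index $\kappa$. Then I would apply the moving-target Schmidt subspace theorem of Ru--Vojta on a suitable Veronese-type embedding of $V$, with a filtration argument in the style of Corvaja--Zannier/Ru. Throughout, I pass to an infinite coherent subset $A\subset\Lambda$ with respect to $\mathcal{Q}$ (\cite[Prop.~2.3]{RV}) and work with special representatives. Every coefficient function is then small with respect to $\mathbf{x}$ by assumption (ii), and the error functions land in $\mathcal{C}_{\mathbf{x}}$. Replacing $Q_j$ by $Q_j^{d/d_j}$ with $d=\operatorname{lcm}(d_j)$, I may assume all $Q_j$ have the same degree $d$. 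The normalized Weil functions $\frac{1}{d_j}\lambda_{D_j(\alpha),v}$ are unchanged up to $O(1)$ by \eqref{plus}.

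\emph{Step 1 (reduction to $n+1$ hypersurfaces at each place).} For fixed $\alpha$ and $v\in S$, order the indices so that
$$\|Q_{1}(\alpha)(\mathbf{x}(\alpha))\|_v/\|\mathbf{x}(\alpha)\|_v^{d}\le\|Q_{2}(\alpha)(\mathbf{x}(\alpha))\|_v/\|\mathbf{x}(\alpha)\|_v^{d}\le\cdots .$$
By $m$-subgeneral position, $\bigcap_{i\le m+1}D_i(\alpha)\cap V=\emptyset$ for almost all $\alpha$. Hence the moving analogue of Lemma~\ref{moving family} bounds the Weil functions of all indices beyond $m+1$ by $\log\gamma_v(\alpha)$ with $\gamma_v\in\mathcal{C}_{\mathbf{x}}$. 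This leaves only the $m+1$ largest Weil functions to control.

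Next I use the index $\kappa$. Any $\min\{m-n,\kappa\}$ of the $D_i(\alpha)$ meet $V$ in codimension $\ge\kappa$, while any $m+1$ meet emptily. From this I would construct, by the standard linear-combination lemma (generic $k$-linear combinations with constant coefficients, in the style of Quang and Cao--Thin), moving hypersurfaces $P_1(\alpha),\dots,P_{n+1}(\alpha)$ of degree $d$ in general position on $V$. The $P_t$ are spanned by $Q_{1},\dots,Q_{t_\ell}$ in blocks, and they satisfy
$$\sum_{i=1}^{m+1}\lambda_{D_i(\alpha),v}\le \Big(\frac{m-n}{\max\{1,\min\{m-n,\kappa\}\}}+1\Big)\sum_{t=1}^{n+1}\lambda_{P_t(\alpha),v}+\log\gamma'_v(\alpha).$$
Because the combination coefficients must work for all but finitely many $\alpha$, I would choose them outside a proper Zariski-closed set over the field $\mathcal{R}_{A,\mathcal{Q}}$. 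Coherence of $A$ is exactly what guarantees that a nonvanishing resultant stays nonzero for almost all $\alpha$.

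\emph{Step 2 (subspace theorem on $V$).} For $P_1,\dots,P_{n+1}$ in general position I would apply the filtration method. Take $N$ large and consider $V_N=\mathcal{R}_{A,\mathcal{Q}}[x]_N/\mathcal{I}_{A,\mathcal{Q}}(V)_N$, filtered by the monomials $P_1^{i_1}\cdots P_{n+1}^{i_{n+1}}$. Choosing a basis adapted to this filtration gives, via the Hilbert polynomial,
$$\sum_t\lambda_{P_t,v}(\mathbf{x})\le\frac{(n+1)\,d}{N H_V(N)}\,(1+o(1))\sum_{\text{basis}}\lambda_{\psi,v}(\mathbf{x}) .$$
The $V$-algebraic nondegeneracy of $\mathbf{x}$ translates into linear nondegeneracy of the image point $\Phi_N(\mathbf{x})$ in $\mathbb{P}^{H_V(N)-1}$ with respect to the moving hyperplanes $\psi$. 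Ru--Vojta's theorem then yields $(H_V(N)+\epsilon)\,h(\Phi_N(\mathbf{x}))=(H_V(N)+\epsilon)N h(\mathbf{x})$. Summing over $v\in S$, combining with Step 1, and letting $N\to\infty$ gives the constant $\big(\frac{m-n}{\max\{1,\min\{m-n,\kappa\}\}}+1\big)(n+1)+\epsilon$, after dividing by $d$ to produce the $1/d_j$ normalization.

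The main obstacle is Step 1: producing the combinations $P_t$ uniformly in $\alpha$ with the sharp factor $\frac{m-n}{\min\{m-n,\kappa\}}$. The dimension-counting induction (each new block of $\kappa$ hypersurfaces lowers dimension by at least one) must hold for generic $\alpha$ simultaneously. I would carry it out over the function field $\mathcal{R}_{A,\mathcal{Q}}$ and then specialize using coherence. The one-sided height bounds (ii) then keep every error term in $\mathcal{C}_{\mathbf{x}}$.
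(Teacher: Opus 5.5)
The paper does not prove this statement; it only quotes it from Cao--Thin. Your architecture is the Cao--Thin one: reduce to $n+1$ hypersurfaces in general position by constant linear combinations, then apply the moving-target subspace theorem on $V$ via a filtration and Ru--Vojta. Step~2 and the first reduction in Step~1 (discarding indices beyond $m+1$) are fine.

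\textbf{The gap.} It sits in the only step where the index enters. Set $\kappa':=\max\{1,\min\{m-n,\kappa\}\}$. You assert the existence of $P_1,\dots,P_{n+1}$ with factor $1+\frac{m-n}{\kappa'}$, but the mechanism you sketch does not give it.
\begin{itemize}
\item The claim that any $\min\{m-n,\kappa\}$ of the $D_i(\alpha)$ meet $V$ in codimension $\ge\kappa$ is false when $m-n<\kappa$. One only gets codimension $\ge\min\{m-n,\kappa\}$.
\item ``Each new block of $\kappa$ hypersurfaces lowers dimension by at least one'' does not follow from the hypotheses. The index condition says nothing about intersections of more than $\kappa$ hypersurfaces. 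For instance, when $\kappa=1$ the first $m-n+1$ of the $D_i(\alpha)$ may all contain a common divisor of $V$.
\end{itemize}
An induction organized around blocks of size $\kappa$ therefore cannot be carried out.

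\textbf{What you need instead.} Take all $Q_j$ of common degree $d$ and fix an ordering $\lambda_{D_1(\alpha),v}\ge\cdots\ge\lambda_{D_{m+1}(\alpha),v}$.
\begin{itemize}
\item For $t\le\kappa'$, put $P_t=Q_t$. These are in general position by the index condition, since $\kappa'\le\kappa$.
\item For $\kappa'<t\le n+1$, let $P_t$ be a generic $k$-linear combination of $Q_1,\dots,Q_{m-n+t}$.
\end{itemize}
Suppose a component $Y$ of $\bigcap_{s<t}P_s\cap V$ of dimension $n-t+1$ were contained in every $D_j$ with $j\le m-n+t$. Then $Y\subset\bigcap_{j\le m-n+t}D_j\cap V$, which has dimension $\le n-t$ by $m$-subgeneral position, a contradiction. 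So the bad coefficient vectors lie in finitely many proper linear subspaces over $\overline{\mathcal{R}_{A,\mathcal{Q}}}$. Since $k$ is infinite, $k$-rational vectors avoid them, and coherence specializes general position to almost all $\alpha$, as you intended. The dimension drop thus comes from $m$-subgeneral position with a lag of exactly $m-n$ indices, not from the index.

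The estimate then goes as follows.
\begin{itemize}
\item For $t>\kappa'$ one has $\lambda_{P_t,v}\ge\lambda_{D_{m-n+t},v}-\log\gamma_v$ with $\gamma_v\in\mathcal{C}_{\mathbf{x}}$. This covers the indices $m-n+\kappa'+1,\dots,m+1$.
\item The remaining $m-n$ indices $\kappa'+1,\dots,\kappa'+m-n$ each satisfy $\lambda_{D_i,v}\le\lambda_{D_{\kappa'},v}\le\frac{1}{\kappa'}\sum_{t\le\kappa'}\lambda_{P_t,v}$.
\end{itemize}
Summing gives exactly your displayed inequality. The missing idea is to keep the top $\kappa'$ hypersurfaces intact and charge the $m-n$ surplus ones to their average. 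Note that the $P_t$ depend on the ordering, so you must build one family for each of the finitely many orderings before Step~2.

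\textbf{An alternative route within this paper.} The statement can also be derived from Theorem~\ref{thm:main} with all $c_i=1$. Two bounds hold: $\alpha_v(W)\le m-n+\codim W$, and $\codim W\ge\min\{\alpha_v(W),\kappa\}$. Together they give $\alpha_v(W)/\codim W\le 1+\frac{m-n}{\kappa'}$. This route needs no linear combinations, but it rests entirely on the weighted theorem.
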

	It is widely recognized that Nevanlinna theory and Diophantine Approximation are deeply connected, as demonstrated by the works of Osgood and Vojta. In particular, Vojta established a dictionary that establishes correspondences between the fundamental concepts of these two theories. Through this dictionary, the subspace theorem, along with its extension to the moving target problem in Diophantine Approximation, corresponds to the second main theorem in Nevanlinna theory. As the latest accurate estimate in Nevanlinna theory, Quang studied a more general case for meromorphic map \cite{Q22}, which considered the case of arbitrary families of hypersurfaces, not required to be in subgeneral position. It contains a notion of distributive constant. In \cite{CQN}, the authors have adjust this concept into moving targets case in Schmidt subspace theory, i.e.-$\Delta_{\mathcal{Q},V}$ of a family of moving hypersurface $\mathcal{Q}=\left\{D_{i}\right\}_{i=1}^{q}$ of $\mathbb{P}^{M}(k)$ in a subvariety $V \subset \mathbb{P}^{M}(k)$, where $V \not \subset \operatorname{supp} D_{i}(i=1, \ldots, q)$, as follows:
	$$
	\Delta_{\mathcal{Q},V}:=\max _{{\Gamma \subset\{1, \ldots, q\}}, \alpha \in A} \frac{\sharp \Gamma}{\codim\left(\bigcap_{j \in \Gamma} D_{j}(\alpha)\right) \cap V} .
	$$
	for some infinite index subset $A$ of $\Lambda$.
	
	\begin{theorem} Let $k$ be a number field, $S$ be a finite set of places of $k$ and let $V$ be an irreducible projective subvariety of $\mathbb{P}^{M}(k)$ of dimension $n$. Let $\mathcal{Q}:=\left\{D_{1}, \ldots, D_{q}\right\}$ be a set of moving hypersurfaces in $\mathbb{P}^{M}(k)$, indexed by $\Lambda$, with the distributive constant $\Delta_{\mathcal{Q}, V}$ in $V(\bar{k})$. Assume that $h\left(D_{i}(\alpha)\right)=o\left(h(\mathbf{x}(\alpha))\right.$, where $\mathbf{x}=\left[x_{0}: \cdots: x_{M}\right]: \Lambda \rightarrow V($ a collection of points), we also assume that $\mathbf{x}$ is non-degenerate over $\mathcal{Q}$. Then for each $\epsilon>0$, there exists an infinite index subset $A \subset \Lambda$ such that
		$$
		\log \prod_{v \in S} \prod_{i=1}^{q}\left(\frac{\|\mathbf{x}(\alpha)\|_{v}^{d_{i}} \cdot\left\|D_{i}(\alpha)\right\|_{v}}{\left\|D_{i}(\mathbf{x}(\alpha))\right\|_{v}}\right)^{\frac{1}{d_{i}}} \leq\left(\Delta_{\mathcal{Q}, V}(n+1)+\epsilon\right) h(\mathbf{x}(\alpha))
		$$
		holds for all $\alpha \in A$ and all $v \in S$.
		Where $\codim \varnothing=\infty$.
	\end{theorem}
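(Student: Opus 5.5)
The plan is to reduce the moving-target statement to a fixed-target-type argument over the field $\cR_{A,\cQ}$, following the scheme of Quang's distributive-constant proof in Nevanlinna theory \cite{Q22} as adapted in \cite{CQN}.

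First, I would pass to an infinite coherent subset $A\subset\Lambda$ with respect to $\cQ$, using \cite[Prop.~2.3]{RV]}. On $A$ the codimensions $\codim\bigl(\bigcap_{j\in\Gamma}D_j(\alpha)\cap V\bigr)$ are constant for all but finitely many $\alpha$. This lets $\Delta_{\cQ,V}$ be read off from the generic intersection pattern over $\cR_{A,\cQ}$. Replacing each $Q_i$ by $Q_i^{d/d_i}$ with $d=\operatorname{lcm}(d_i)$, I may assume all degrees equal $d$. This changes the left side only by the factor $1/d_i$ already present, and preserves the smallness $h(D_i(\alpha))=o(h(\mathbf{x}(\alpha)))$.

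Next, for each $v\in S$ and $\alpha$, I would order the indices so that $\|Q_{i_1}(\alpha)(\mathbf{x}(\alpha))\|_v\le\cdots\le\|Q_{i_q}(\alpha)(\mathbf{x}(\alpha))\|_v$. I would then select a chain $\Gamma_1\subsetneq\Gamma_2\subsetneq\cdots$ of initial segments at which the codimension $c_t=\codim\bigl(\bigcap_{j\in\Gamma_t}D_j\cap V\bigr)$ jumps, until codimension $n+1$ (the empty intersection) is reached. Quang's combinatorial lemma then gives
\[
\sum_{i=1}^q \lambda_{D_i(\alpha),v}(\mathbf{x}(\alpha))\le \Delta_{\cQ,V}\sum_{t}(c_t-c_{t-1})\,\lambda_{D_{j_t}(\alpha),v}(\mathbf{x}(\alpha))+\log\gamma_v(\alpha),
\]
with $\gamma_v\in\mathcal{C}_{\mathbf{x}}$. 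Here I use $\sharp\Gamma_t\le\Delta_{\cQ,V}c_t$ and the monotonicity of the Weil functions along the ordering. The error term comes from a version of Lemma \ref{moving family} applied to the $n+1$ chosen hypersurfaces, whose common zero set on $V$ is empty. It yields $\max_j\|Q_j(\alpha)(\mathbf{x})\|_v\ge\gamma\|\mathbf{x}\|_v^d$, so the tail terms are bounded. Then I would generically replace the selected $Q_{j_t}$ by $n+1$ linear combinations $P_1,\dots,P_{n+1}$ with coefficients in $k$ (or in $\cR_{A,\cQ}$). These are in general position on $V$, with $\lambda_{P_t}$ dominating weighted sums of the $\lambda_{D_{j_t}}$ up to $\mathcal{C}_{\mathbf{x}}$ terms. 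The main inequality then reduces to bounding $\sum_{v\in S}\sum_{t=1}^{n+1}\lambda_{P_t(\alpha),v}(\mathbf{x}(\alpha))$. The sum is finite over the possible choices, so a maximum over finitely many configurations is taken.

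The final step is the general-position moving Schmidt theorem for the $P_t$ on $V$ (Dethloff–Tan/Quang \cite{Q18}). It gives the bound $(n+1+\epsilon)h(\mathbf{x}(\alpha))$ on an infinite subset of $A$, using $V$-algebraic nondegeneracy of $\mathbf{x}$ via Hilbert-function filtration and Ru–Vojta's moving subspace theorem. Multiplying by $\Delta_{\cQ,V}$ gives the claim. I expect the main obstacle to be the combinatorial step. It must be made uniform in $\alpha$: the chain and the auxiliary linear combinations must be chosen from finitely many possibilities, with coefficients of small height. I would handle this by fixing the combinations over $\cR_{A,\cQ}$ using coherence, and absorbing all heights into $o(h(\mathbf{x}(\alpha)))$.
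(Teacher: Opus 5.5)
Your argument is correct in outline. It is the Quang-type argument you describe: ordering, a chain of codimension jumps, Abel summation with $\sharp\Gamma\le\Delta_{\cQ,V}\codim$, generic linear combinations, and a general-position theorem.

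The paper reaches the statement differently. It never proves it directly. It obtains it as the case $c_1=\cdots=c_q=1$ of the weighted Theorem~\ref{thm:main}, together with Remark~\ref{2.8}. The inequality actually needed is the following. Given $W$, put $\Gamma=\{i: W\subset \Supp D_i(\alpha)\}$. Then $\alpha_v(W)=\sharp\Gamma$ and $\codim W\ge \codim\bigl(\bigcap_{i\in\Gamma}D_i(\alpha)\cap V\bigr)$, so $\alpha_v(W)/\codim W\le \Delta_{\cQ,V}$.

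Inside the proof of Theorem~\ref{thm:main} three things differ from your route:
\begin{enumerate}
\item Your Abel summation is replaced by the weighted filtration inequality of Lemma~\ref{generalChebyshev}, which for unit weights is exactly your summation.
\item The reduction to general position is done by listing the initial-segment intersections $\tilde D_{I_s}$ with multiplicity $b_s-b_{s-1}$, borrowing the subscheme framework of \cite{HL25}.
\item Instead of citing a general-position theorem, the paper re-derives the bound $(n+1+\epsilon)$. It builds an auxiliary map $F$ from Zariski's inertia forms (Theorem~\ref{thm:inertia}) and applies Ru--Vojta to linear forms $L^{J(v,\alpha)}_\ell$ chosen for each $(v,\alpha)$.
\end{enumerate}
The detour buys arbitrary nonnegative weights $c_i$. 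For the unweighted statement your route is shorter. Your replacement of the intersections by generic $k$-linear combinations of the $Q_i$ is also precisely what turns the paper's claim that the $\tilde D_{I_s}$ are in general position into a statement about hypersurfaces, to which a Schmidt-type theorem applies. The paper only asserts that step.

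Two points need to be made precise when you write it out.

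First, the combinations attached to the $t$-th jump must involve all $Q_i$ in the initial segment up to the jump index, not only the jump hypersurfaces $Q_{j_1},\dots,Q_{j_t}$. For example, take $V=\mathbb{P}^3$ with $D_1=\{x_0x_1=0\}$, $D_2=\{x_0x_2=0\}$, $D_3=\{x_1^2=0\}$. The initial segments have codimensions $1,1,2$, but $D_1\cap D_3=\{x_1=0\}$ has codimension $1$. Using the whole segment costs nothing, since all its members have Weil function at least $\lambda_{D_{j_t}}$. Correspondingly, the count entering the Abel summation is $\sharp\Gamma\le\Delta_{\cQ,V}c_t$ for the longest initial segment of codimension $c_t$.

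Second, your $(n+1)$-tuple depends on $v$ and $\alpha$. So the general-position theorem must be used in the form with a maximum over admissible tuples inside the sum over $v\in S$. This is the form of the Ru--Vojta theorem quoted in the introduction, and it is how the paper uses the $L^{J(v,\alpha)}_\ell$. Applying a fixed-family version to each of the finitely many configurations and then maximizing does not work: the infinite subsets produced for different configurations need not intersect.
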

	In accordance with the latest research findings and advancements in \cite{HL25}, the authors have constructed the following vital inequalities with consideration of weights for fixed closed subschemes (also contain the definition of hypersurfaces): 
	\begin{lemma}(\cite{HL25}, Lemma 3.1, reformulated in logarithm)\label{generalChebyshev}
		Set $b_1,\ldots, b_n,c_1,\ldots, c_n$ be nonnegative real numbers as sets of indicators. Let $i_0$ be the smallest one of the index $i$ such that $c_{i}\not = 0$ (assume that there exists at least one satisfied), then
		\begin{align*}
			\log\prod_{i=1}^n\Delta^{b_i}_i\geq \log\prod_{i=1}^n\Delta^{\left(\min_{i_0\leq j\leq n} \frac{\sum_{i=1}^jb_i}{\sum_{i=1}^jc_i}\right)c_i}_i.
		\end{align*}
	\end{lemma}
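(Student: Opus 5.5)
The statement leaves its hypotheses on the $\Delta_i$ implicit. I will work under the standing assumption of \cite{HL25}, which is what makes the inequality meaningful: the $\Delta_i$ are real numbers with
\[
\Delta_1\ge \Delta_2\ge\cdots\ge\Delta_n\ge 1 .
\]
In the application they are the ordered quantities $\|\mathbf{x}\|_v^{d}\|Q\|_v/\|Q(\mathbf{x})\|_v$, i.e.\ exponentials of Weil functions. Without monotonicity the claim is false in general. The plan is a summation by parts (Abel summation) argument, which reduces the weighted inequality to the termwise comparison of partial sums that defines the minimum in the exponent.

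First I would set $t:=\min_{i_0\le j\le n}\frac{\sum_{i=1}^j b_i}{\sum_{i=1}^j c_i}$. This is well defined because $\sum_{i=1}^j c_i\ge c_{i_0}>0$ for every $j\ge i_0$. I would also put $B_j=\sum_{i\le j}b_i$ and $C_j=\sum_{i\le j}c_i$. Next, with the convention $\Delta_{n+1}:=1$, define
\[
\delta_j:=\log\Delta_j-\log\Delta_{j+1}\qquad(1\le j\le n).
\]
Each $\delta_j$ is nonnegative by the monotonicity and $\Delta_n\ge 1$. Telescoping gives $\log\Delta_i=\sum_{j=i}^n\delta_j$. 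Interchanging the order of summation then yields
\[
\sum_{i=1}^n b_i\log\Delta_i=\sum_{j=1}^n \delta_j B_j,\qquad \sum_{i=1}^n t\,c_i\log\Delta_i=\sum_{j=1}^n \delta_j\, t\,C_j .
\]

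It then suffices to show $B_j\ge tC_j$ for every $j$, since each $\delta_j\ge 0$. There are two cases.
\begin{itemize}
\item For $j\ge i_0$, the inequality $B_j\ge tC_j$ is exactly the definition of $t$ as a minimum, using $C_j>0$.
\item For $j<i_0$, minimality of $i_0$ gives $c_1=\cdots=c_j=0$, so $C_j=0$, while $B_j\ge 0$ because the $b_i$ are nonnegative.
\end{itemize}
Summing $\delta_jB_j\ge\delta_j tC_j$ over $j$ and rewriting $t\,c_i\log\Delta_i$ as $\log\Delta_i^{tc_i}$ gives the lemma.

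The only real obstacle is conceptual rather than computational: identifying and justifying the ordering hypothesis $\Delta_1\ge\cdots\ge\Delta_n\ge 1$, which the reformulated statement suppresses. In the later moving-target applications this hypothesis will be secured by reindexing the closed subschemes at each $\alpha$ so that their Weil functions are decreasing, and by discarding negative contributions. Some care is needed there because the ordering may depend on $\alpha$, though the lemma itself is purely pointwise.
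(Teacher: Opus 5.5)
Your proof is correct. With $\delta_j=\log\Delta_j-\log\Delta_{j+1}\ge 0$ and $\Delta_{n+1}=1$, Abel summation reduces the claim to $B_j\ge tC_j$ for every $j$. This is the definition of $t$ when $j\ge i_0$, and it is trivial when $j<i_0$ because $C_j=0$. The paper gives no proof of this lemma; it quotes it from \cite{HL25}. So there is no competing argument in the text, and your summation-by-parts proof is a self-contained substitute for that citation.

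You are also right that the statement as reproduced in the paper has lost an essential hypothesis. Without $\Delta_1\ge\cdots\ge\Delta_n\ge 1$ it is false:
for $n=2$, $b=(1,0)$, $c=(0,1)$ one gets $i_0=2$, $t=1$, and the claim becomes $\log\Delta_1\ge\log\Delta_2$;
for $b=(0,1)$, $c=(1,0)$ one gets $t=0$, and the claim becomes $\log\Delta_2\ge 0$.

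One caution about your closing remark on applications. The monotonicity must hold for the $\Delta_j$ actually fed into the lemma. In the proof of Theorem~\ref{thm:main} the indices are ordered by $\lambda_{\tilde Q_i(\alpha),v}(\mathbf{x}(\alpha))$. The lemma is then applied to $\Delta_j=\exp\bigl(\lambda_{\tilde Q_{i_j}(\alpha),v}(\mathbf{x}(\alpha))/d_{i_j}\bigr)$, which need not be decreasing when the degrees $d_i$ differ. Reindexing by decreasing Weil functions secures your hypothesis only after the degrees are normalized. One way is to replace each $Q_i$ by $Q_i^{d/d_i}$ for a common multiple $d$, which leaves supports unchanged. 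Equivalently, order by $\lambda_{\tilde Q_i(\alpha),v}(\mathbf{x}(\alpha))/d_i$ rather than by the Weil functions themselves.
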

	\begin{corollary}
		\label{cor_elemineq}
		Set $b_1,\ldots, b_n,c_1,\ldots, c_n$ be nonnegative real numbers as sets of indicators. Assume that $b_1\not = 0$. Then
		\begin{align*}
			\log\prod_{i=1}^n\Delta^{\left(\max_{1\leq j\leq n} \frac{\sum_{i=1}^jc_i}{\sum_{i=1}^jb_i}\right)b_i}_i\geq \log\prod_{i=1}^n\Delta^{c_i}_i.
		\end{align*}
	\end{corollary}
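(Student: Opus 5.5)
The plan is to deduce the corollary directly from Lemma~\ref{generalChebyshev}, applied to a rescaled copy of the weights $b_i$. I assume the standing hypothesis implicit in Lemma~\ref{generalChebyshev}, namely $\Delta_1\geq\Delta_2\geq\cdots\geq\Delta_n\geq 1$, so that every $\log\Delta_i\geq 0$.

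First, set
$$M:=\max_{1\leq j\leq n}\frac{\sum_{i=1}^jc_i}{\sum_{i=1}^jb_i}.$$
This is well defined and finite. Indeed, $b_1\neq 0$ and all $b_i\geq 0$, so every denominator satisfies $\sum_{i=1}^j b_i\geq b_1>0$. By the definition of $M$, for every $j$ we get
$$\sum_{i=1}^j c_i\leq M\sum_{i=1}^j b_i. \qquad (\ast)$$

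Next I dispose of the degenerate case in which all $c_i=0$. Then the right-hand side of the corollary is $0$. The left-hand side is $\sum_i Mb_i\log\Delta_i$, and each term is $\geq 0$ because $M\geq 0$, $b_i\geq 0$ and $\log\Delta_i\geq 0$. So the inequality holds.

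Otherwise some $c_i\neq 0$, and $i_0$ (the smallest index with $c_{i_0}\neq 0$) exists. I apply Lemma~\ref{generalChebyshev} with $b_i':=Mb_i$ and $c_i':=c_i$; these are nonnegative reals, so the lemma applies. It gives
$$\sum_{i=1}^n Mb_i\log\Delta_i\geq \mu\sum_{i=1}^n c_i\log\Delta_i, \qquad \mu:=\min_{i_0\leq j\leq n}\frac{M\sum_{i=1}^jb_i}{\sum_{i=1}^jc_i}.$$
For $j\geq i_0$ the denominator $\sum_{i\le j}c_i$ is positive, so $\mu$ is a finite number. By $(\ast)$ each quotient in the minimum is $\geq 1$, hence $\mu\geq 1$.

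Finally, since $c_i\log\Delta_i\geq 0$ for every $i$, we have $\mu\sum_i c_i\log\Delta_i\geq \sum_i c_i\log\Delta_i$. Chaining this with the inequality from the lemma gives the claimed bound.

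The only delicate points are bookkeeping. One is checking that the rescaled weights still satisfy the hypotheses of Lemma~\ref{generalChebyshev}. The other is that the nonnegativity of $\log\Delta_i$ is what allows us to replace the factor $\mu\geq 1$ by $1$. Neither step is a real obstacle once the ordering hypothesis on the $\Delta_i$ is made explicit.
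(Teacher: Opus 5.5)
Your proof is correct and is the intended derivation: the paper gives no separate proof of this corollary and presents it as an immediate consequence of Lemma~\ref{generalChebyshev}. You are also right that the hypothesis $\Delta_1\geq\cdots\geq\Delta_n\geq 1$, which the paper's reformulation omits, has to be supplied. In fact $\mu=1$ exactly: the ratios with $j<i_0$ vanish, so $M=\max_{i_0\leq j\leq n}\frac{\sum_{i\leq j}c_i}{\sum_{i\leq j}b_i}$, which gives $\min_{i_0\leq j\leq n}\frac{\sum_{i\leq j}b_i}{\sum_{i\leq j}c_i}=1/M$; equivalently, you could apply the lemma to the unscaled weights and multiply through by $M$.
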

	They have also paid more attention to the special properties of Weil functions corresponding to the intersection of closed subschemes, which brings up the theorem in the formula of
	\begin{theorem}[\cite{HL25}, Theorem 1.2, reformulated in hypersurfaces with degree $d_{i}$]
		Let $V$ be a  projective variety of dimension $n$  defined over a number field $k$, and let $S$ be a finite set of places of $k$. For each $v\in S$, let $D_{1},\ldots, D_{q}$ be hypersurfaces of $V$ defined over $k$, and let $c_{1},\ldots, c_{q}$ be nonnegative real numbers.  For a closed subset $W\subset V$ and $v\in S$, let
		\begin{align*}
			\alpha_v(W)=\sum_{\substack{i\\ W\subset \Supp D_{i}}}c_{i}.
		\end{align*}
		Then there exists a proper Zariski closed subset $Z$ of $V$ and $\epsilon>0$ such that
		\begin{align*}
			\log\prod_{v\in S}\prod_{i=1}^q\left(\frac{\|\mathbf{x}\|_{v}^{d_i}\|D_{i}\|_{v}}{\|D_{i}(\mathbf{x})\|_{v}}\right)^{c_{i}\cdot\frac{1}{d_{i}}} \leq  \left((n+1)\max_{\substack{v\in S\\ \varnothing\subsetneq W\subsetneq X}}\frac{\alpha_v(W)}{\codim W}+\epsilon\right)h(\mathbf{x})
		\end{align*}
		for all points $\mathbf{x}\in V(k)\setminus Z$.
	\end{theorem}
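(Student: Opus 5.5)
The plan is to reduce the weighted inequality, place by place and point by point, to an unweighted inequality along a chain of intersections, and then to control that chain by a Schmidt-type theorem for closed subschemes with Seshadri constants. First replace each $D_i$ by $\frac{1}{d_i}D_i$, i.e. work with $\lambda_i:=\frac1{d_i}\lambda_{D_i,v}$. This normalisation is consistent with the Remark that $\epsilon_{D_i,A}=1/d_i$ for a hyperplane class $A$, so after rescaling every hypersurface behaves like a hyperplane section. Fix $v\in S$ and $\mathbf x\in V(k)$ outside $\bigcup_i\Supp D_i$. Order the indices as $i_1,\dots,i_q$ so that $\lambda_{i_1}(\mathbf x)\ge\cdots\ge\lambda_{i_q}(\mathbf x)$; there are only finitely many orderings, and we treat each one separately. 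Put $W_j:=D_{i_1}\cap\cdots\cap D_{i_j}\cap V$. By (\ref{cap}) we have $\lambda_{W_j,v}(\mathbf x)=\lambda_{i_j}(\mathbf x)+O(1)$.

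Next apply Lemma \ref{generalChebyshev} with $\Delta_j:=e^{\lambda_{i_j}(\mathbf x)}$ after a telescoping rewrite. Write $\lambda_{i_j}=\sum_{l\ge j}(\lambda_{i_l}-\lambda_{i_{l+1}})$, which expresses both sides as nonnegative combinations of the increments. Take $c_j$ to be the weight $c_{i_j}$, and take $b_j:=\codim W_j-\codim W_{j-1}\ge 0$. The partial sums are then $\sum_{l\le j}b_l=\codim W_j$ and $\sum_{l\le j}c_l\le \alpha_v(\Supp W_j)$, since every $D_{i_l}$ with $l\le j$ contains $W_j$. Corollary \ref{cor_elemineq} therefore gives
\[
\sum_j c_{i_j}\lambda_{i_j}\le \Big(\max_{W}\frac{\alpha_v(W)}{\codim W}\Big)\sum_j b_j\,\lambda_{W_j,v}+O(1).
\]
When $W_j=\varnothing$ we have $\lambda_{W_j,v}=O(1)$, so only the nonempty proper $W_j$ matter. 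The case $b_1=0$ cannot occur, because $V\not\subset\Supp D_{i_1}$.

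It then remains to prove
\[
\sum_{v\in S}\sum_j b_j\lambda_{W_j,v}(\mathbf x)\le (n+1+\epsilon)h(\mathbf x)
\]
outside a proper Zariski closed set. Note that $\sum_j b_j\le n$ and that only the jump indices contribute. At each jump index $j$ we have $\codim W_j=\codim W_{j-1}+b_j$, so $W_j$ is locally cut out from $W_{j-1}$ by $b_j$ suitable linear combinations of the normalised equations. The approach is to choose, generically (this is where finiteness of the data matters), linear combinations $Y_1,\dots,Y_n$ of the normalised defining forms of the $D_{i_l}$ such that $W_j$ contains the intersection of the first $\codim W_j$ of the $Y$'s. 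For each place $v$ these $Y$'s are in general position. Using the inclusion property of Weil functions, the left side is then bounded by $\sum_v\sum_{r\le n}\lambda_{Y_r,v}$. The Schmidt subspace theorem for general-position divisors (equivalently, Heier–Levin's $\sum\epsilon_{Y,A}\lambda_Y\le(n+1+\epsilon)h$, with $\epsilon_{Y_r,A}=1$ after normalisation) then gives the bound, with the exceptional set $Z$ the union over the finitely many orderings and places.

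The main obstacle is this last step. The Weil functions are minima over different orderings at different places, so the choice of the $Y_r$ must dominate $\lambda_{W_j,v}$ uniformly in $v$. One also has to arrange that the same general-position family works simultaneously for all $v\in S$. I would try first to pass to a Veronese embedding of common degree $\mathrm{lcm}(d_i)$ and take generic linear combinations over $k$, so that a single finite collection of hyperplanes, in general position, handles every chain.
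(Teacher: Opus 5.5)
Your first half is the reduction of \cite{HL25} that the paper imitates in the proof of Theorem \ref{thm:main}, and it is correct; the quoted theorem itself is not proved in the paper. That first half consists of ordering the normalised Weil functions, passing to the chain $W_j$, and applying Corollary \ref{cor_elemineq} with $b_j=\codim W_j-\codim W_{j-1}$.

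The gap is in the final step, where your condition on the $Y_r$ points the wrong way. From $W_j\supseteq\bigcap_{r\le\codim W_j}Y_r$, the inclusion property only gives $\min_{r\le\codim W_j}\lambda_{Y_r,v}\le\lambda_{W_j,v}+O(1)$. That is a \emph{lower} bound for $\lambda_{W_j,v}$, so it cannot bound $\sum_j b_j\lambda_{W_j,v}$ by $\sum_r\lambda_{Y_r,v}$. What you need is $W_j\subseteq Y_r$ for every $r$ in the block $\codim W_{j-1}<r\le\codim W_j$. Concretely, with $L=\mathrm{lcm}(d_i)$, $Y_r$ must be a combination of $Q_{i_1}^{L/d_{i_1}},\dots,Q_{i_j}^{L/d_{i_j}}$ only. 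Then $\frac1L\lambda_{Y_r,v}\ge\min_{l\le j}\frac1{d_{i_l}}\lambda_{D_{i_l},v}-O(1)=\lambda_{i_j}-O(1)$. The real content is that generic choices of this restricted kind still meet $V$ properly. This holds because the base locus on $V$ of the span of those forms is $W_j$, of dimension $n-\codim W_j<n-r+1$. Hence a generic member contains no codimension-$(r-1)$ component of $Y_1\cap\dots\cap Y_{r-1}\cap V$, and one extra generic hypersurface completes a general-position family.

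The obstacle you single out is not real, and your proposed remedy would fail. Evertse--Ferretti, and the Heier--Levin theorem for closed subschemes, already allow the targets to depend on $v$. The dependence on $\mathbf x$ through the orderings is handled as follows:
\begin{enumerate}
\item partition $V(k)$ according to the tuple $(\sigma_v)_{v\in S}$ of orderings, of which there are finitely many;
\item apply the theorem once for each tuple;
\item take $Z$ to be the union of the resulting exceptional sets.
\end{enumerate}
This is a union over tuples, not over orderings and places separately.

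A single general-position family covering every chain does not exist in general. The hypersurfaces handling the first step of orderings that begin with $i$ must contain $D_i\cap V$. So if three of the $D_i$ share a curve on a threefold $V$, no such family is in general position. Moreover, running Schmidt's theorem in the Veronese ambient space gives the ambient dimension plus one rather than $n+1$.

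The detour through hypersurfaces can also be skipped, as in \cite{HL25}. The list in which $W_j$ is repeated $b_j$ times is automatically in general position as a family of closed subschemes: any subfamily intersects in its last member, whose codimension is at least the number of members. Each $W_j$, taken with multiplicities $L/d_{i_l}$, is cut out by forms of degree $L$, so $\epsilon_{W_j,A}\ge 1/L$ for the hyperplane class $A$. The closed-subscheme Schmidt theorem then applies directly.
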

	This has provided insights and inspiration for our main result below, we have extended it to the case of moving targets with distinct weights:
	\begin{theorem}\label{thm:main} Let $k$ be a number field, $S$ be a finite set of places of $k$ and let $V$ be an irreducible projective subvariety of $\mathbb{P}^{M}(k)$ of dimension $n$. Let $\mathcal{Q}:=\left\{D_{1}, \ldots, D_{q}\right\}$ be a set of moving hypersurfaces in $\mathbb{P}^{M}(k)$, indexed by $\Lambda$. Assume that $h\left(D_{i}(\alpha)\right)=o\left(h(\mathbf{x}(\alpha))\right.$, where $\mathbf{x}=\left[x_{0}: \cdots: x_{M}\right]: \Lambda \rightarrow V($ a collection of points), we also assume that $\mathbf{x}$ is non-degenerate over $\mathcal{Q}$. Then for each $\epsilon>0$ and let $c_{1},\ldots, c_{q}$ be nonnegative real numbers, there exists an infinite index subset $A \subset \Lambda$ such that
		\begin{align*}
			\log \prod_{v \in S} \prod_{i=1}^{q}\left(\frac{\|\mathbf{x}(\alpha)\|_{v}^{d_i}\|D_{i}(\alpha)\|_{v}}{\|D_{i}(\alpha)(\mathbf{x}(\alpha))\|_{v}}\right)^{c_{i}\cdot\frac{1}{d_i}}
			\leq \left((n+1)\max_{\substack{v\in S\\ \varnothing\subsetneq W\subsetneq X}}\frac{\alpha_v(W)}{\codim W}+\epsilon\right) h(\mathbf{x}(\alpha))
		\end{align*}
		holds for all $\alpha \in A$ outside finite set.
		Where
		\begin{align*}
			\alpha_v(W)=\sum_{\substack{i\\ W\subset \Supp D_{i}(\alpha)}}c_{i}.
		\end{align*}
		with a closed subset $W\subset X$ and $v\in S$, $\codim \varnothing=\infty$.
	\end{theorem}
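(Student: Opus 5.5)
We follow the strategy of \cite{HL25}, transported to moving targets in the way \cite{CQN} transports Quang's distributive-constant argument. First we use \cite[Prop.~2.3]{RV} to pass to an infinite coherent subset $A\subset\Lambda$ with respect to $\mathcal{Q}$. On $A$ the incidence data are then constant outside a finite set: for each $\Gamma\subset\{1,\ldots,q\}$, the dimension of $\bigcap_{j\in\Gamma}D_j(\alpha)\cap V$ and the truth of $W\subset\Supp D_i(\alpha)$ for the relevant $W$ are governed by polynomial conditions in the coefficients $a_{j,I}(\alpha)$. Each such condition holds either for almost all $\alpha\in A$ or for finitely many. Hence the right-hand side constant
\[
\delta:=\max_{v,W}\frac{\alpha_v(W)}{\codim W}
\]
is independent of $\alpha$ off a finite set. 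After replacing each $Q_i$ by $Q_i^{D/d_i}$ with $D=\mathrm{lcm}(d_i)$, we may assume all $d_i=d$. This rescales the Weil functions in a way compatible with the exponent $1/d_i$.

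Next, for fixed $\alpha$ and $v$, we order the indices so that $\lambda_{D_{1}(\alpha),v}(\mathbf{x}(\alpha))\ge\cdots\ge\lambda_{D_{q}(\alpha),v}(\mathbf{x}(\alpha))$. There are finitely many orderings, so by passing to an infinite subset (pigeonhole on $A$) we may fix one ordering for each $v\in S$. We then build the chain $W_j=\bigcap_{i\le j}D_i(\alpha)\cap V$. We pick indices $j_1<\cdots<j_{n+1}$ where $\codim W_j$ jumps; the generic case is $\codim W_{j_t}=t$. The block between consecutive jumps contributes weight $\sum c_i$, which is bounded by $\alpha_v(W_{j_t})$.

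We then apply Lemma~\ref{generalChebyshev} (in the form of Corollary~\ref{cor_elemineq}) with $\Delta_i=e^{\lambda_i}$, $c_i$ the weights, and $b$ the codimension increments. Since partial sums satisfy $\sum_{i\le j}c_i\le \alpha_v(W_j)\le \delta\,\codim W_j=\delta\sum_{i\le j}b_i$, this gives
\[
\sum_i c_i\lambda_{D_i(\alpha),v}(\mathbf{x}(\alpha))\le \delta\sum_{t=1}^{n}\lambda_{D_{j_t}(\alpha),v}(\mathbf{x}(\alpha)) + O(1)\cdot(\text{terms in }\mathcal{C}_{\mathbf{x}}),
\]
using $\lambda_{W_j}=\min_{i\le j}\lambda_{D_i}$ from \eqref{cap} and the ordering. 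The indices beyond the empty-intersection level contribute bounded terms. Those bounds come from Lemma~\ref{moving family}-type estimates, which produce functions in $\mathcal{C}_{\mathbf{x}}$ and therefore only $o(h(\mathbf{x}(\alpha)))$ errors; the small-height hypothesis $h(D_i(\alpha))=o(h(\mathbf{x}(\alpha)))$ ensures this. The chosen $n$ (or $n+1$) hypersurfaces $D_{j_1},\ldots,D_{j_{n}}$ intersect $V$ properly. We complete them to $n+1$ moving hypersurfaces in general position on $V$ by adding a generic constant or moving hypersurface of small height.

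Finally, we sum over $v\in S$ and apply the moving-target subspace theorem for $n+1$ hypersurfaces in general position on $V$ with a $V$-algebraically non-degenerate sequence. This is the case $\Delta_{\mathcal{Q},V}=1$ of the \cite{CQN} theorem above, applied to the finitely many possible selected subfamilies. It gives
\[
\sum_{v\in S}\sum_{t}\lambda_{D_{j_t}(\alpha),v}\le (n+1+\epsilon')h(\mathbf{x}(\alpha))
\]
on an infinite subset, and multiplying by $\delta$ yields the claim.

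The main obstacle is the middle step: the chain argument with non-generic codimension jumps, i.e. when $\codim W_j$ jumps by more than one. There $b_i$ must be the codimension increments, and we must verify that the weighted inequality of Lemma~\ref{generalChebyshev} still bounds by $\delta$ times a sum over $n$ (not more) proper-intersection divisors. We also need the hypotheses ($b_1\neq 0$, the first nonzero $c_{i_0}$) to hold uniformly in $\alpha$ after the coherent-subset reduction.
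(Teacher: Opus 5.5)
Your overall architecture matches the paper's: a coherent subset, ordering of the Weil functions at each $v$, the chain $W_j$, Lemma~\ref{generalChebyshev}/Corollary~\ref{cor_elemineq} with codimension increments, then a general-position subspace theorem. The gap is in the step that links the filtration to the general-position theorem. After the Chebyshev step the bound is $\delta\sum_s(b_s-b_{s-1})\lambda_{W_s,v}$. You then replace $\lambda_{W_{j_t},v}$ by $\lambda_{D_{j_t},v}$ and assert that $D_{j_1},\ldots,D_{j_n}$ meet $V$ properly. The values agree by the ordering, but general position is a property of the subschemes, and for the $D_{j_t}$ it fails in two ways.

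First, a jump $b_s-b_{s-1}\ge 2$ needs $\lambda_{W_s,v}$ with multiplicity $b_s-b_{s-1}$. Repeating $D_s$ is not in general position, since two copies meet in $D_s\cap V$, of codimension $1$.

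Second, even when every jump equals $1$, the jump at $j_t$ can depend on the non-jump hypersurfaces between $j_{t-1}$ and $j_t$. Take $V=\mathbb{P}^3$ with $D_1=\{x_0=0\}$, $D_2=\{x_1x_2=0\}$, $D_3=\{x_1x_3=0\}$, $D_4=\{x_2x_3=0\}$, and $\mathbf{x}=[p^{100}:p^{10}:p^{5}:1]$ at a place $v\mid p$. The Weil functions divided by degree are $100,\,7.5,\,5,\,2.5$ (in units of $n_v\log p$), strictly decreasing. Then $\codim W_j=1,2,2,3$, so the jumps are at $j=1,2,4$. Yet $D_1\cap D_2\cap D_4\supset\{x_0=x_2=0\}$ has codimension $2$. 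So the general-position subspace theorem cannot be applied to your selected family. You flag the large-jump case as the main obstacle, but the problem already occurs for unit jumps, and fixing it needs a different object in the general-position step, not a verification.

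The paper, following \cite{HL25}, keeps the intersections instead. The list in which $W_s$ (the paper's $\tilde D_{I_s(v,\alpha)}$) is repeated $b_s-b_{s-1}$ times is automatically in general position. Any $m$ members meet in the $W_s$ of largest index among them, and $\codim W_s=b_s\ge m$.

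To use this list with a theorem about hypersurfaces, replace the copies of $W_s$ by $b_s-b_{s-1}$ generic $k$-linear combinations of $Q_1^{D/d_1},\ldots,Q_s^{D/d_s}$ (in your ordering). Choose them inductively so that all $n$ resulting forms are in general position on $V$ over $\mathcal{R}_{A,\mathcal{Q}}$; by coherence this then holds for all but finitely many $\alpha$. Each such form $P$ satisfies $\lambda_{P,v}\ge\lambda_{W_s,v}$ up to the logarithm of a function in $\mathcal{C}_{\mathbf{x}}$. In the example, $Q_3+Q_4=x_3(x_1+x_2)$ works for $W_4$.

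A smaller point: your selected family depends on $v$. The last step therefore needs a moving subspace theorem that allows a different general-position subfamily at each place, i.e. the $\max_K$ form of Ru--Vojta. The paper gets this through the auxiliary map $F$ and the forms $L_\ell^{J(v,\alpha)}$. Applying the $\Delta_{\mathcal{Q},V}=1$ case of \cite{CQN} to each subfamily separately and adding loses a factor of up to $\sharp S$. Applying it to the union of the subfamilies does not work either, since that union is in general not in general position.
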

	In comparison with Theorem 2.3, under the assumption $c_{i}=1$ for all $i$, from the definitions and a straightforward observation that when taking the maximum over $W$, it suffices to consider the cases where $W$ is the intersection of a subset of hypersurfaces of $\mathcal{Q}$, we have
	\begin{remark}(recover the result in \cite{CQN})\label{2.8}
		\begin{align*}
			\Delta_{\mathcal{Q},V}=\max _{{\Gamma \subset\{1, \ldots, q\}}, \alpha \in A} \frac{\sharp \Gamma}{\codim\left(\bigcap_{j \in \Gamma} D_{j}(\alpha)\right) \cap V}=\max\left\{1, \max_{\varnothing\subsetneq W\subsetneq X}\frac{\alpha_v(W)}{\codim W}\right\}.
		\end{align*}
		for some infinite index subset $A$ of $\Lambda$.
	\end{remark}
	\textbf{Justification of the equality in Remark \ref{2.8}}: When $c_i=1$ for all $i$, the equality $\alpha_v(W)=\sharp\left\{i:W \subset \Supp D_i(\alpha)\right\} $ counts the number of hypersurfaces containing $W$. For $W=\bigcap_{j \in \Gamma}D_j(\alpha)\cap V$, we have $\alpha_v(W) \geq \sharp \Gamma$ since all $D_j(\alpha)$ with $j \in \Gamma$ contain $W$. Conversely, any hypersurface $D_i(\alpha)$ containing $W$ must satisfy $W \subset D_i(\alpha)$, so the maximum is achieved when $W$ is exactly the intersection of the hypersurfaces in some subset $\Gamma$. The factor 1 in the maximum accounts for the case when no proper intersection structure gives a larger value.
	
	As a result, as stated in \cite{HL25}, we can obtain a factor that is not only sharp but also optimal in certain cases, especially when specific intersection exist. This flexibility is essential when different hypersurfaces contribute asymmetrically to the approximation error, a scenario previously unaddressed in the moving target framework. \par
	\section{An auxiliary result in algebra}
    \subsection{The Inertia Form Method}\label{subsec:inertia}
    
    The following is the foundation of our approach. Let $V \subseteq \mathbb{P}^M$ be an irreducible projective variety of dimension $n$, defined by the homogeneous ideal $\mathcal{I}(V) \subseteq k[x_0, \ldots, x_M]$.
    
    For a positive integer $N$, let
    \[
    H_V(N) = \dim_k k[x_0, \ldots, x_M]_N / \mathcal{I}(V)_N
    \]
    be the Hilbert function of $V$. For $N \gg 0$, $H_V(N)$ is a polynomial in $N$ of degree $n$.
    
    The following theorem is due to Zariski \cite{Z} in the classical setting, and was adapted to moving hypersurfaces by Son-Tan-Thin \cite{STT}.
    
    \begin{theorem}[Inertia Form Theorem]\label{thm:inertia}
    	Let $D_1, \ldots, D_n$ be hypersurfaces in $\mathbb{P}^M$ of degrees $d_1, \ldots, d_n$, defined by polynomials $Q_1, \ldots, Q_n$. Let $V \subseteq \mathbb{P}^M$ be an irreducible variety of dimension $n$ with $\deg V = \Delta$. Let $N$ be a positive integer divisible by all $d_i$ and sufficiently large.
    	
    	Then there exist homogeneous polynomials
    	\[
    	\phi_1, \ldots, \phi_{H_V(N)} \in k[x_0, \ldots, x_M]_N
    	\]
    	whose residue classes form a basis of $k[x]_N / \mathcal{I}(V)_N$, such that
    	\begin{equation}\label{eq:inertia}
    		\prod_{\ell=1}^{H_V(N)} \phi_\ell \equiv (Q_1 \cdots Q_n)^{\frac{\Delta N^{n+1}}{d_1 \cdots d_n (n+1)!} - u(N)} \cdot P \pmod{\mathcal{I}(V)_N}
    	\end{equation}
    	where:
    	\begin{itemize}
    		\item $u(N) = O(N^n)$ is an error term;
    		\item $P$ is a homogeneous polynomial of degree $\frac{\Delta N^{n+1}}{(n+1)!} + O(N^n)$;
    		\item The congruence means the difference lies in $\mathcal{I}(V)_N$.
    	\end{itemize}
    \end{theorem}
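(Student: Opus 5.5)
The plan is to prove Theorem \ref{thm:inertia} by the filtration method of Corvaja--Zannier and Evertse--Ferretti, in the form used by Son--Tan--Thin \cite{STT}. I will use the standing assumption, implicit in the statement, that $D_1,\ldots,D_n$ meet $V$ properly, i.e.\ $V\cap D_1\cap\cdots\cap D_n$ is finite (zero-dimensional). Write $V_N:=k[x]_N/\mathcal{I}(V)_N$.

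\textbf{Step 1 (filtration).} For $\mathbf{i}=(i_1,\ldots,i_n)\in\mathbb{N}^n$ set
\[
W_{\mathbf{i}}=\sum_{\mathbf{e}\geq\mathbf{i}\ (\text{lex})} Q_1^{e_1}\cdots Q_n^{e_n}\,k[x]_{N-\sum_j d_je_j}\ \ (\mathrm{mod}\ \mathcal{I}(V)_N).
\]
This gives a decreasing filtration of $V_N$, indexed by $\mathbf{i}$ with $\sum_j d_j i_j\le N$. Let $\mathbf{i}'$ be the lexicographic successor of $\mathbf{i}$. Multiplication by $Q_1^{i_1}\cdots Q_n^{i_n}$ induces a surjection from $k[x]_{N-\sum d_ji_j}/(\mathcal{I}(V),Q_1,\ldots,Q_n)$ onto the quotient $W_{\mathbf{i}}/W_{\mathbf{i}'}$.

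\textbf{Step 2 (sizes of the quotients).} Properness makes $Q_1,\ldots,Q_n$ a regular sequence modulo $\mathcal{I}(V)$, at least after saturation; this is where \cite{STT} use the inertia-form and Cohen--Macaulay-type arguments. Granting it, the surjection in Step 1 is an isomorphism whenever $N-\sum_j d_ji_j$ is larger than some constant $u_0$. In that range the quotient has dimension exactly $\Delta d_1\cdots d_n$, the Hilbert polynomial of the zero-dimensional scheme $V\cap D_1\cap\cdots\cap D_n$. The indices with $N-\sum_j d_ji_j\le u_0$ form a set of size $O(N^{n-1})$, each contributing $O(1)$.

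\textbf{Step 3 (adapted basis and counting).} Choose a basis of $V_N$ adapted to the filtration, so each element has the form $\phi=Q_1^{i_1}\cdots Q_n^{i_n}g$. Taking the product over the basis, the exponent of $Q_j$ is
\[
\sum_{\mathbf{i}} i_j\,\dim\bigl(W_{\mathbf{i}}/W_{\mathbf{i}'}\bigr).
\]
By Step 2 this equals
\[
\Delta d_1\cdots d_n\sum_{\sum_l d_l i_l\le N} i_j+O(N^n).
\]
A Riemann-sum evaluation of the lattice sum gives $\frac{N^{n+1}}{d_1\cdots d_n\,d_j\,(n+1)!}+O(N^n)$. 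Since $N$ is divisible by each $d_j$, I will replace $Q_j$ by $Q_j^{N/d_j}$, or equivalently normalise the statement's exponent accordingly. The exponents then agree up to $O(N^n)$. Let $E-u(N)$ be their common minimum and absorb the leftover powers of the $Q_j$ into $P$. The remaining $g$'s also go into $P$. Its degree is then forced by comparing degrees:
\[
H_V(N)\,N=\frac{\Delta N^{n+1}}{n!}+O(N^n).
\]

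\textbf{Main obstacle.} The main difficulty is Step 2. One has to show that the graded pieces have constant dimension $\Delta d_1\cdots d_n$ outside an $O(N^{n-1})$-sized boundary. In the moving setting one also needs the constants uniform in $\alpha$ over a coherent infinite set $A$. My first attempt would be to work over the field $\mathcal{R}_{A,\mathcal{Q}}$ and use coherence to specialise, so that properness and the Hilbert function are preserved for all but finitely many $\alpha$. I would then check that the final exponent bookkeeping matches the normalisation written in \eqref{eq:inertia}.
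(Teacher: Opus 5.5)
The paper gives no proof of this statement: it is quoted from Zariski \cite{Z} and Son--Tan--Thin \cite{STT}. Your outline therefore has to carry the whole argument, and it has a genuine gap in Step 2, which is where all the content lies.

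\textbf{The gap in Step 2.} Properness of $D_1,\dots,D_n$ on $V$ makes $Q_1,\dots,Q_n$ a system of parameters, not a regular sequence modulo $\mathcal I(V)$. Regularity, even in large degrees (your ``after saturation''), holds only when $V$ is Cohen--Macaulay at the points of $V\cap D_1\cap\cdots\cap D_n$. The statement, however, is for an arbitrary irreducible $V$.

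At a non-CM point $p$ one has $\operatorname{length}(\mathcal O_{V,p}/(q_1,\dots,q_n))>e(q;\mathcal O_{V,p})$, while B\'ezout only controls the sum of the multiplicities $e$. So the Hilbert polynomial of $k[x]/(\mathcal I(V),Q_1,\dots,Q_n)$ is then strictly larger than $\Delta d_1\cdots d_n$. Both halves of Step 2 then fail:
\begin{itemize}
\item the constant you assign to the graded pieces is wrong;
\item the surjections of Step 1 cannot be injective on most of the index set, since otherwise $\sum_{\mathbf i}\dim W_{\mathbf i}/W_{\mathbf i'}$ would exceed $H_V(N)$.
\end{itemize}

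\textbf{Example.} Let $V\subset\mathbb P^4$ be the cone over the quartic $[s^4:s^3t:st^3:t^4]\subset\mathbb P^3$, so $\Delta=4$. Take $Q_1=x_0$, $Q_2=x_3$, which meet $V$ only at the vertex. Then $k[x]/(\mathcal I(V),Q_1,Q_2)$ has Hilbert polynomial $5$. In the main range one computes $\dim W_{\mathbf i}/W_{\mathbf i'}=5$ for $i_2=0$ but $4$ for $i_2\ge 1$.

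\textbf{What Step 2 should assert.} The correct claim is the inequality $\dim W_{\mathbf i}/W_{\mathbf i'}\ge\Delta d_1\cdots d_n$ whenever $N-\sum_j d_ji_j\ge u_0$. Here is one way to prove it.
\begin{itemize}
\item The graded pieces are the degree-$(N-\sum_j d_ji_j)$ parts of rings obtained from $k[x]/\mathcal I(V)$ by successively passing from $S$ to $S/(Q_jS+(0:_SQ_j^{i_j}))$.
\item The $Q_j$-power torsion is supported on $V\cap D_1\cap\cdots\cap D_j$. This has smaller dimension than $S$, so it does not lower the top multiplicity, and each step multiplies that multiplicity by at least $d_j$.
\item Only finitely many such rings occur, because the annihilators $(0:_SQ_j^{i})$ stabilise; this gives a uniform $u_0$.
\end{itemize}
Combine this with the exact count $\sum_{\mathbf i}\dim W_{\mathbf i}/W_{\mathbf i'}=H_V(N)$. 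The excesses are then nonnegative and total $O(N^{n-1})$. Since $i_j\le N/d_j$, each exponent is still $\Delta N^{n+1}/(d_j(n+1)!)+O(N^n)$, as you wanted. Alternatively, add a Cohen--Macaulay hypothesis explicitly.

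\textbf{Normalisation in Step 3.} Separately, Step 3 does not reach the displayed formula.
\begin{itemize}
\item For unequal $d_j$ your exponents differ at order $N^{n+1}$, so they do not ``agree up to $O(N^n)$''. Taking their minimum would shift $\deg P$ by order $N^{n+1}$.
\item Your replacement $Q_j\mapsto Q_j^{N/d_j}$ gives the common exponent $\Delta N^n/(n+1)!$, which is not the displayed one.
\item Read literally with $Q_1\cdots Q_n$, the display is degree-inconsistent for $n\ge 2$ unless all $d_i=1$.
\item The reading that matches the displayed exponent is $Q_j\mapsto Q_j^{D/d_j}$ with $D=d_1\cdots d_n$ and $D\mid N$.
\end{itemize}
Say explicitly which version you prove. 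You were right to add the properness hypothesis, without which the statement is false.
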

    
    \begin{remark}
    	The exponents in \eqref{eq:inertia} are chosen so that both sides have the same degree, we have $$
    	N \cdot H_{V}(N)-\frac{n \cdot \Delta \cdot N^{n+1}}{(n+1)!}+u(N)=\frac{\Delta \cdot N^{n+1}}{(n+1)!}+O\left(N^{n}\right),
    	$$
    	and
    	\begin{align*}
    		H_{V}(N) & :=\operatorname{dim}_{k} \frac{k\left[x_{0}, \ldots, x_{M}\right]_{N}}{\mathcal{I}(V)_{N}} \\
    		& =\operatorname{dim}_{\bar{k}} \frac{\bar{k}\left[x_{0}, \ldots, x_{M}\right]_{N}}{\mathcal{I}(V(\bar{k}))_{N}}=\Delta \cdot \frac{N^{n}}{n!}+O\left(N^{n-1}\right) .
    	\end{align*}
    \end{remark}

	\section{Proof of the Main Theorem}\label{subsec:proof}
	
	The proof proceeds in several steps. The key innovation is to use the inertia form method to construct an auxiliary map $F$, to which we apply the Schmidt subspace theorem for moving hyperplanes align with the filtration inequality.
	
	\begin{proof}[Proof of Theorem \ref{thm:main}]
		By definition \ref{def:coherent}, choose an infinite coherent subset $A \subseteq \Lambda$. For each $j = 1, \ldots, q$, fix an index $I_j \in \mathcal{T}_{d_j}$ such that $a_{j,I_j}(\alpha) \not\equiv 0$ on $A$. Define the \emph{normalized coefficients} and \emph{normalized polynomials}:
		\[
		\tilde{a}_{j,I} := \frac{a_{j,I}}{a_{j,I_j}} \in \mathcal{R}_{A,\mathcal{Q}}, \qquad \tilde{Q}_j := \sum_{I \in \mathcal{T}_{d_j}} \tilde{a}_{j,I} x^I \in \mathcal{R}_{A,\mathcal{Q}}[x_0, \ldots, x_M].
		\]
		It is easy to verify that
		$$
		\log \frac{\|\mathbf{x}\|_{v}^{d}\left\|Q_{i}\right\|_{v}}{\|Q_{i}(\mathbf{x})\|_{v}}=\log \frac{\|\mathbf{x}\|_{v}^{d}\|\tilde{Q}_{i}\|_{v}}{\|\tilde{Q}_{i}(\mathbf{x})\|_{v}}.
		$$
		Note that $\tilde{Q}_j$ has at least one coefficient equal to $1$ (the coefficient of $x^{I_j}$). The Weil functions of moving hypersurface $\tilde{D}_j(\alpha)$ defined by $\tilde{Q}_j(\alpha)$ satisfy for each $j$:
		\begin{equation}\label{eq:weil-normalize}
			\lambda_{\tilde{Q}_j(\alpha),v}(\mathbf{x}(\alpha)) = \lambda_{\tilde{D}_j(\alpha),v}(\mathbf{x}(\alpha)) = \log \frac{\|\mathbf{x}(\alpha)\|_v^{d_j} \|\tilde{Q}_j(\alpha)\|_v}{\|\tilde{Q}_j(\alpha)(\mathbf{x}(\alpha))\|_v} + O(1),
		\end{equation}
		where the $O(1)$ term depends only on $v$ and the choice of $I_j$, not on $\alpha$.\par
		For each $v \in S$ and $\alpha \in A$, order the indices $\{1, \ldots, q\}$ as $\{i_1(v,\alpha), \ldots, i_q(v,\alpha)\}$ such that
		\[
		\lambda_{\tilde{Q}_{i_1(v,\alpha)}(\alpha),v}(\mathbf{x}(\alpha)) \geq \lambda_{\tilde{Q}_{i_2(v,\alpha)}(\alpha),v}(\mathbf{x}(\alpha)) \geq \cdots \geq \lambda_{\tilde{Q}_{i_q(v,\alpha)}(\alpha),v}(\mathbf{x}(\alpha)) \geq 0.
		\]
		In order to get our result we need to construct the following intersections of the normalized hypersurfaces,
		set $$\tilde D_{I_{(v,\alpha)}}(\alpha)=\bigcap_{i\in I}\tilde  D_{i(v,\alpha)}(\alpha)$$ 
		thus we have
		\begin{align*}
			\lambda_{\tilde Q_{i_{j}(v,\alpha)}(\alpha),v}(\mathbf{x}(\alpha))=\min_{j'\leq j}\lambda_{\tilde Q_{i_{j'}(v,\alpha)}(\alpha),v}(\mathbf{x}(\alpha))=\lambda_{\tilde Q_{I_{j}(v,\alpha)}(\alpha),v}(\mathbf{x}(\alpha)).
		\end{align*}
		Set $I_{{j}(v,\alpha)}=\{i_{1}(v,\alpha),\ldots, i_{j}(v,\alpha)\}$. Let $l_v(\alpha)$ be the largest index such that
		\begin{align*}
			\bigcap_{j=1}^{l_v(\alpha)}\tilde D_{i_{j}(v,\alpha)}(\alpha)\cap V(\bar{k})\neq \varnothing.
		\end{align*}
		In particular, for $j>l_v(\alpha)$, $\lambda_{\tilde Q_{i_{j}(v,\alpha)}(\alpha)}(\mathbf{x}(\alpha))$ is bounded by a constant independent of $\mathbf{x}(\alpha)$, set $d_i=d_{{i}(v,\alpha)}$, we have
		\begin{align*}
			\sum_{v \in S}\sum_{i=1}^{q} \frac{\lambda^{c_{i}}_{\tilde Q_{i}(\alpha),v}(\mathbf{x}(\alpha))}{d_{i}}\leq \sum_{v \in S}\sum_{j=1}^{l_v(\alpha)} \frac{\lambda^{c_{i_j(v,\alpha)}}_{\tilde Q_{I_{j}(v,\alpha)}(\alpha)}(\mathbf{x}(\alpha))}{d_{i_j}}. 
		\end{align*}
		For each $v \in S$ and $\alpha \in A$, define the \emph{codimension function} for the initial segment intersection:
		\[
		b_{j(v,\alpha)} := \min\left\{\mathrm{codim}\left(\bigcap_{s=1}^j \tilde{D}_{i_s(v,\alpha)}(\alpha) \cap V\right), n\right\}, \quad b_{0(v,\alpha)} := 0.
		\]
		
		Apply the weighted filtration inequality (Lemma \ref{generalChebyshev}) with:
		\begin{itemize}
			\item $\Delta_j = \exp\left(\frac{1}{d_{i_j}} \lambda_{\tilde{Q}_{i_j(v,\alpha)}(\alpha),v}(\mathbf{x}(\alpha))\right)$ for $j = 1, \ldots, q$;
			\item $b_1 = b_{1(v,\alpha)}-b_{0(v,\alpha)}$, $\cdots$, $b_{l_v(\alpha)} = b_{{l_v(\alpha)}(v,\alpha)}-b_{{l_v(\alpha)-1}(v,\alpha)}$
			\item $c_j = c_{i_j(v,\alpha)}$.
		\end{itemize}
		Using the telescoping property for the $b_i$ and seshadri constant property of hypersurface, we have
		\begin{equation}\label{cheb_application}
			\sum_{v\in S}\left(\max_{j} \frac{\sum_{j'=1} ^{j} c_{i_{j'}(v,\alpha)}}{b_{j(v,\alpha)}}\right) \sum_{s=1}^{l_v(\alpha)} (b_{s(v,\alpha)}-b_{s-1(v,\alpha)})\frac{\lambda_{\tilde Q_{I_{s}(v,\alpha)}}(\mathbf{x}(\alpha))}{d_{I_s}}
		\end{equation}
		
		\begin{equation*}
			\geq \sum_{v \in S}\sum_{j=1}^{l_v(\alpha)} \frac{\lambda^{c_{i_j(v,\alpha)}}_{\tilde Q_{I_{j}(v,\alpha)}(\alpha)}(\mathbf{x}(\alpha))}{d_{i_j}}.
		\end{equation*}
		The key observation is that we can form a new list such that $\tilde Q_{I_{s}(v,\alpha)}$ are in general position with respect to $V$. (with $\tilde Q_{I_{s}(v,\alpha)}$ repeated $b_{s(v,\alpha)}-b_{s-1(v,\alpha)}$ times and omitted if $b_{s(v,\alpha)}-b_{s-1(v,\alpha)}=0$, see \cite{HL25})\par 
		Set the above indicator set as $\{1,\dots,k\}$,
		for each $v\in S$, and $\alpha \in A$, there exist a subset $$J(v, \alpha)=\{j_1(v, \alpha), \dots, j_n(v, \alpha)\} \subset \{1,\dots,k\}$$ such that
		\begin{align*}
			0< ||\tilde{Q}_{j_1(v, \alpha)}(\alpha)(\mathbf{x}(\alpha))||_v\le \dots& \le ||\tilde{Q}_{j_n(v, \alpha)}(\alpha)(\mathbf{x}(\alpha))||_v\\
			&\le \min_{j\not \in \{j_1(v, \alpha), \dots, j_n(v, \alpha)\}} ||\tilde{Q}_{j}(\alpha)(\mathbf{x}(\alpha))||_v.
		\end{align*}
		By using the lemma \ref{moving family} we have
		\begin{align*}
			&\log \prod_{j=1}^{k}||\tilde{Q}_j(\alpha)(\mathbf{x}(\alpha))||_v\\
			&=\log \prod_{\beta_j\not \in J(v, \alpha)}||\tilde{Q}_{\beta_j}(\alpha)(\mathbf{x}(\alpha))||_v+\log \prod_{t=1}^{n}||\tilde{Q}_{j_t(v, \alpha)}(\alpha)(\mathbf{x}(\alpha))||_v\notag\\
			&\ge (k-n)d \log ||\mathbf{x}(\alpha)||_v-\log \overset{\sim}h_v(\mathbf{x}(\alpha))\notag
			+\log \prod_{t=1}^{n}||\tilde{Q}_{j_t(v, \alpha)}(\alpha)(\mathbf{x}(\alpha))||_v,
		\end{align*}
		where $ \overset{\sim}h_v=\prod(1+h_{\mu})$, $h_{\mu}$ runs over all the choices of $\gamma_{2, v}$, thus $ \overset{\sim}h_v \in \mathcal C_x.$\par
		Now we construct the auxiliary map $F$ via inertia forms (our ordering above dose not affect subsequent construction and results):
		
		Fix $N \gg 0$ divisible by all $d_j$. Consider the relative coordinate ring:
		\[
		R_N := \mathcal{R}_{A,\mathcal{Q}}[x_0, \ldots, x_M]_N / \mathcal{I}_{A,\mathcal{Q}}(V)_N.
		\]
		Let $H_V(N) = \dim_{\mathcal{R}_{A,\mathcal{Q}}} R_N$.
		
		For each choice of $n$ indices $J(v,\alpha) = \{j_1(v,\alpha), \ldots, j_n(v,\alpha)\}$, apply Theorem \ref{thm:inertia} to the normalized polynomials $\tilde{Q}_{j_1(v,\alpha)}, \ldots, \tilde{Q}_{j_n(v,\alpha)}$. There exist:
		\begin{itemize}
			\item A basis $\phi_1^{J(v,\alpha)}, \ldots, \phi_{H_V(N)}^{J(v,\alpha)}$ of $R_N$;
			\item A polynomial $P_{J(v,\alpha)}$ of degree $\frac{(\deg V) N^{n+1}}{(n+1)!} + O(N^n)$;
		\end{itemize}
		such that for all $\mathbf{x}(\alpha) \in V(k)$:
		\begin{equation}\label{eq:inertia-specialize}
			\prod_{\ell=1}^{H_V(N)} \phi_\ell^{J(v,\alpha)}(\alpha)(\mathbf{x}(\alpha)) = \left(\prod_{t=1}^n \tilde{Q}_{j_t(v,\alpha)}(\alpha)(\mathbf{x}(\alpha))\right)^{\frac{(\deg V) N^{n+1}}{d(n+1)!} - u(N)} \cdot P_{J(v,\alpha)}(\alpha)(\mathbf{x}(\alpha))
		\end{equation}
		where $d = d_{j_1} \cdots d_{j_n}$ and $u(N) = O(N^n)$.
		
		Now fix an arbitrary basis $\Phi_1, \ldots, \Phi_{H_V(N)}$ of $R_N$. For each $J(v,\alpha)$, there exist linear forms (they are linear independent in $\cR_{A,\cQ})$
		\[
		L_1^{J(v,\alpha)}, \ldots, L_{H_V(N)}^{J(v,\alpha)} \in \mathcal{R}_{A,\mathcal{Q}}[y_1, \ldots, y_{H_V(N)}]
		\]
		such that
		\[
		\phi_\ell^{J(v,\alpha)} \equiv L_\ell^{J(v,\alpha)}(\Phi_1, \ldots, \Phi_{H_V(N)}) \pmod{\mathcal{I}_{A,\mathcal{Q}}(V)}.
		\]
		We  write
		$$L_\ell^{J(v,\alpha)}(y_1, \dots, y_{H_V(N)})=\sum_{s=1}^{H_V(N)}g_{\ell r}y_s, \quad g_{\ell s}\in\mathcal R_{A, \cQ}.$$
		Since $L_1^{J(v,\alpha)}, \dots, L_{H_N(V)}^{J(v,\alpha)}$ are linear independent over $\mathcal R_{A, \cQ},$   we have $\det (h_{\ell s})\ne 0\in \mathcal R_{A,\cQ}.$
		Thus, due to cohenrent property  of $A,$
		$\det(h_{\ell s})(\beta)\ne 0$  for all $\beta\in A$, outside a finite subset of $A.$ By passing to an infinite subset if necessary, we may assume that $L_1^{J(v,\alpha)}(\beta),\dots,L_{H_N(V)}^{J(v,\alpha)}(\beta)$ are lineraly independent over $k$ for all $\beta\in A.$\par
		Define the map:
		\[
		F: A \to \mathbb{P}^{H_V(N)-1}(k), \qquad F(\alpha) = [\Phi_1(\mathbf{x}(\alpha)), \ldots, \Phi_{H_V(N)}(\mathbf{x}(\alpha))].
		\]
		We claim that $F$ is linearly nondegenerate with respect to $\cL.$ Indeed, ortherwise, then there is a linear form $L\in\cR_{B, \cL}[y_1,\dots, y_{H_N(V)}]$ for some infinite coherent subset $B\subset A,$ such that $L(F)|_B\equiv 0$ in $B,$ which contradicts to the assumption that $x$ is algebraically nondegenerate with respect to $\cQ.$
		
		On the other hand, it is easy to see that there exist $h_{J(v,\alpha)} \in \mathcal{C}_{\mathbf{x}}$ such that
		\begin{align*}
			& \|P_{J(v,\alpha)}(\alpha)(\mathbf{x}(\alpha))\|_{v} 
			\leq  \|\mathbf{x}(\alpha)\|_{v}^{\operatorname{deg} P_{J}} h_{J(v,\alpha)}(\alpha)=\|\mathbf{x}(\alpha)\|_{v}^{\frac{\operatorname{deg} V \cdot N^{n+1}}{(n+1)!}+v(N)} h_{J(v,\alpha)}(\alpha).
		\end{align*}
		Therefore, we have
		\begin{align*}
			& \log \prod_{\ell=1}^{H_{V}(N)}\|\phi_{\ell}^{J(v,\alpha)}(\alpha)(\mathbf{x}(\alpha))\|_{v} \\
			\leq & \left(\frac{\operatorname{deg} V \cdot N^{n+1}}{d(n+1)!}-u(N)\right) \cdot \log \|\prod_{t=1}^n \tilde{Q}_{j_t(v,\alpha)}(\alpha)(\mathbf{x}(\alpha))\|_{v} \\
			& +\log ^{+} h_{J(v,\alpha)}(\alpha)+\left(\frac{\operatorname{deg} V \cdot N^{n+1}}{(n+1)!}+v(N)\right) \log \|\mathbf{x}(\alpha)\|_{v}.
		\end{align*}
		This implies that there are functions $\zeta_{1}(N), \zeta_{2}(N) \leq O\left(\frac{1}{N}\right)$ such that
		\begin{align*}
			& \log \|\prod_{t=1}^n \tilde{Q}_{j_t(v,\alpha)}(\alpha)(\mathbf{x}(\alpha))\|_{v} \\
			\geq & \left(\frac{d(n+1)!}{\operatorname{deg} V \cdot N^{n+1}}+\zeta_{1}(N)\right) \cdot \log \prod_{\ell=1}^{H_{V}(N)}\|\phi_{\ell}^{J(v,\alpha)}(\alpha)(\mathbf{x}(\alpha))\|_{v} \\
			& -\left(\frac{d(n+1)!}{\operatorname{deg} V \cdot N^{n+1}}+\zeta_{1}(N)\right) \log ^{+} h_{J(v,\alpha)}(\alpha)-\left(d+\zeta_{2}(N)\right) \log \|\mathbf{x}(\alpha)\|_{v}.
		\end{align*}
	
		Also, we have the following lemmas to estimate the heights and linear form:
		\begin{lemma}\label{lem:height-F}
			The height of $F$ satisfies:
			\[
			h(F(\alpha)) \leq N \cdot h(\mathbf{x}(\alpha)) + o(h(\mathbf{x}(\alpha))).
			\]
		\end{lemma}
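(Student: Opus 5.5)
The plan is to bound the height of $F(\alpha)=[\Phi_1(\mathbf{x}(\alpha)):\cdots:\Phi_{H_V(N)}(\mathbf{x}(\alpha))]$ place by place and then sum over $M_k$. First we fix the basis $\Phi_1,\ldots,\Phi_{H_V(N)}$ of $R_N$ concretely. Since $\mathcal{I}_{A,\mathcal{Q}}(V)$ is generated by $\mathcal{I}(V)\subset k[x_0,\ldots,x_M]$, the quotient $k[x]_N/\mathcal{I}(V)_N$ has a basis of monomials of degree $N$. These stay a basis of $R_N$ after extending scalars to $\mathcal{R}_{A,\mathcal{Q}}$, because $\mathcal{R}_{A,\mathcal{Q}}$ is a field containing $k$ and $H_V(N)$ is the same over both. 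We take the $\Phi_s$ to be these monomials, which have constant coefficients in $k$.

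With this choice the estimate is straightforward. For each $v\in M_k$, every $\Phi_s$ is a monomial $\mathbf{x}^{I}$ with $|I|=N$, so $\|\Phi_s(\mathbf{x}(\alpha))\|_v\le \|\mathbf{x}(\alpha)\|_v^N$. Hence
\[
\max_s \|\Phi_s(\mathbf{x}(\alpha))\|_v \le \|\mathbf{x}(\alpha)\|_v^{N}.
\]
We take logarithms and sum over all $v\in M_k$, using the definition $h(F(\alpha))=\sum_v\log\max_s\|\Phi_s(\mathbf{x}(\alpha))\|_v$. This sum is well defined by the product formula, provided not all $\Phi_s(\mathbf{x}(\alpha))$ vanish. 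That nonvanishing follows from nondegeneracy of $\mathbf{x}$ outside finitely many $\alpha$, or simply because some monomial $x_i^N$ is nonzero at $\mathbf{x}(\alpha)$ and lies in the span. The result is $h(F(\alpha))\le N\,h(\mathbf{x}(\alpha))$ exactly, and the claim follows.

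If one insists on an arbitrary basis $\Phi_s=\sum_I b_{s,I}x^I$ with $b_{s,I}\in\mathcal{R}_{A,\mathcal{Q}}$, the argument needs one extra step, and I expect this step to be the main obstacle. We clear denominators by multiplying all $\Phi_s$ by a common $b\in\mathcal{R}_{A,\mathcal{Q}}$; this does not change the projective point. The coefficients then become polynomials in the ratios $a_{j,I}/a_{j,I_j}$. By hypothesis (ii) these have height $o(h(\mathbf{x}(\alpha)))$, so the coefficient vector $(b_{s,I}(\alpha))$ is small in the sense of $\mathcal{K}_{\mathbf{x}}$. At each place we then get
\[
\max_s\|\Phi_s(\mathbf{x}(\alpha))\|_v\le c_v\,\|(b_{s,I}(\alpha))\|_v\,\|\mathbf{x}(\alpha)\|_v^N,
\]
where $c_v=1$ for non-archimedean $v$ and $c_v$ is a binomial-count constant for archimedean $v$. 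Summing over $v$ gives $N\,h(\mathbf{x}(\alpha))+h\big((b_{s,I}(\alpha))\big)+O(1)$. The remaining task is to check that the height of this finite coefficient vector is $o(h(\mathbf{x}(\alpha)))$. This follows because it is bounded by a fixed multiple of $\sum_j h(D_j(\alpha))$ plus $O(1)$, since sums and products of small functions stay in $\mathcal{K}_{\mathbf{x}}$. This gives the stated bound.
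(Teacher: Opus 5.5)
Your proposal is correct and uses the same argument as the paper: at each place, bound $\|\Phi_\ell(\mathbf{x}(\alpha))\|_v$ by the size of the coefficients times $\|\mathbf{x}(\alpha)\|_v^N$, then sum over $M_k$. Your version is more careful than the paper's, because the monomial basis gives $h(F(\alpha))\le N\,h(\mathbf{x}(\alpha))$ exactly, and for a general $\mathcal{R}_{A,\mathcal{Q}}$-basis your $o(h(\mathbf{x}(\alpha)))$ bound on the coefficient-vector height (with the archimedean constant $c_v$) justifies the paper's claim that this term is ``bounded independent of $\alpha$,'' which is literally true only when the $\Phi_\ell$ have coefficients in $k$.
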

		
		\begin{proof}
			Each $\Phi_\ell$ has degree $N$ in $\mathbf{x}$, so $\|\Phi_\ell(\mathbf{x}(\alpha))\|_v \leq \|\mathbf{x}(\alpha)\|_v^N \cdot \|\Phi_\ell\|_v$. Summing over $v \in M_k$:
			\[
			h(F(\alpha)) = \sum_{v \in M_k} \log \max_\ell \|\Phi_\ell(\mathbf{x}(\alpha))\|_v \leq N \cdot h(\mathbf{x}(\alpha)) + \sum_{v \in M_k} \log \max_\ell \|\Phi_\ell\|_v.
			\]
			The second term is bounded independent of $\alpha$. \end{proof}
		
		\begin{lemma}\label{lem:height-L}
			For each $J$ and $\ell$, the height of the linear form $L_\ell^J$ satisfies:
			\[
			h(L_\ell^{J(v,\alpha)}(\alpha)) = o(h(\mathbf{x}(\alpha))).
			\]
		\end{lemma}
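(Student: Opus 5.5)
The plan is to trace the coefficients of $L_\ell^{J(v,\alpha)}$ back to the coefficients $\tilde a_{j,I}$ of the normalized polynomials and use the smallness hypothesis $h(D_j(\alpha))=o(h(\mathbf{x}(\alpha)))$. The coefficients $g_{\ell s}$ of $L_\ell^{J}$ come from two choices. The first is the fixed basis $\Phi_1,\ldots,\Phi_{H_V(N)}$, which has constant coefficients in $k$. The second is the inertia-form basis $\phi_\ell^{J}$, which by the construction of Theorem \ref{thm:inertia} (Zariski, adapted by Son--Tan--Thin) consists of products of monomials with powers of the $\tilde Q_{j_t}$, together with elements obtained by linear algebra over $\mathcal{R}_{A,\mathcal{Q}}$. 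Hence each $g_{\ell s}$ is a rational function, of degree bounded in terms of $N$, $M$, $n$ and the $d_j$ only, in the finitely many elements $\tilde a_{j,I}\in\mathcal{R}_{A,\mathcal{Q}}$, with coefficients in $k$ that do not depend on $\alpha$.

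First I will show that each $\tilde a_{j,I}$, viewed as a map on $A$ outside a finite set, lies in $\mathcal{K}_{\mathbf{x}}$. Since $\tilde a_{j,I}(\alpha)=a_{j,I}(\alpha)/a_{j,I_j}(\alpha)$, we have $h(\tilde a_{j,I}(\alpha))\le h(Q_j(\alpha))=h(D_j(\alpha))=o(h(\mathbf{x}(\alpha)))$. Coherence of $A$ guarantees $a_{j,I_j}(\alpha)\neq 0$ outside a finite set.

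Next I will use the fact recorded above that $\mathcal{K}_{\mathbf{x}}$ is a ring, and is closed under inversion of elements that vanish only finitely often. Together with the elementary inequalities $h(a+b)\le h(a)+h(b)+\log 2$, $h(ab)\le h(a)+h(b)$ and $h(1/a)=h(a)$, this shows that every polynomial or rational expression of bounded complexity in the $\tilde a_{j,I}$ again lies in $\mathcal{K}_{\mathbf{x}}$. The only requirement is that denominators are nonzero in $\mathcal{R}_{A,\mathcal{Q}}$, and coherence then makes them nonzero at all but finitely many $\alpha$. Applying this to every $g_{\ell s}$ gives $h(g_{\ell s}(\alpha))=o(h(\mathbf{x}(\alpha)))$. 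Finally, for the projective height of the linear form we have $h(L_\ell^{J}(\alpha))\le\sum_s h(g_{\ell s}(\alpha))+O(1)$, where the $O(1)$ depends only on $H_V(N)$. This follows by bounding $\log\max_s\|g_{\ell s}\|_v\le\sum_s\log^+\|g_{\ell s}\|_v$ at each place and summing over $M_k$.

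The main obstacle is making the first step precise: the coefficients of $\phi_\ell^J$, and hence of $L^J_\ell$, must be rational in the $\tilde a_{j,I}$ with complexity independent of $\alpha$. The proof of Theorem \ref{thm:inertia} produces the $\phi_\ell^J$ by solving linear systems over the field $\mathcal{R}_{A,\mathcal{Q}}$, for instance by choosing bases of the filtration pieces $(\tilde Q_{j_1}^{i_1}\cdots\tilde Q_{j_n}^{i_n})R_N$. I will handle this by fixing such a choice once and for all in $\mathcal{R}_{A,\mathcal{Q}}$, which is the field generated over $k$ by the $\tilde a_{j,I}$. Any element of that field is then a fixed rational expression in them, and the complexity is automatically uniform in $\alpha$. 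Since there are only finitely many possible $J\subset\{1,\ldots,k\}$ with $\sharp J=n$, finitely many such $L^J_\ell$ occur, and the $o(\cdot)$ bounds hold uniformly after discarding a finite subset of $A$.
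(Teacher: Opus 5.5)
Your proposal is correct and follows the same route as the paper: express the coefficients of $L_\ell^{J}$ through the normalized coefficients $\tilde a_{j,I}$, then use the hypothesis $h(D_j(\alpha))=o(h(\mathbf{x}(\alpha)))$. Your version is more careful than the paper's two-line argument in four respects: you treat the coefficients as rational rather than merely polynomial expressions in the $\tilde a_{j,I}$, you control the denominators via coherence and the inversion-closure of $\mathcal{K}_{\mathbf{x}}$, you obtain the sharper bound $h(\tilde a_{j,I}(\alpha))\le h(Q_j(\alpha))$, and you handle the $\alpha$-dependence of $J(v,\alpha)$ by noting that only finitely many $J$ can occur.
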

		
		\begin{proof}
			The coefficients of $L_\ell^{J(v,\alpha)}$ are polynomials in the normalized coefficients $\tilde{a}_{j,I}$ with coefficients in $k$. Since $h(\tilde{a}_{j,I}(\alpha)) \leq 2h(Q_j(\alpha)) + O(1) = o(h(\mathbf{x}(\alpha)))$ by the small height hypothesis, the claim follows. \end{proof}

		Apply the Schmidt subspace theorem for moving hyperplanes (Ru-Vojta \cite{RV}, or Cao-Quang-Thin \cite{CQN} for the weighted version) to the map $F$ and the family of moving hyperplanes $\{L_\ell^{J(v,\alpha)}\}$ at each $v$ and $\alpha$:
		
		For any $\epsilon > 0$, there exists an infinite subset $A' \subseteq A$ such that for all $\alpha \in A'$:
		\begin{equation}\label{eq:sst-F}
			\sum_{v \in S} \sum_{\ell=1}^{H_V(N)} \log \frac{\|F(\alpha)\|_v \|L_\ell^{J(v,\alpha)}(\alpha)\|_v}{\|L_\ell^{J(v,\alpha)}(\alpha)(F(\alpha))\|_v} \leq (H_V(N) + \epsilon) h(F(\alpha)).
		\end{equation}
		From the inertia form equation \eqref{eq:inertia-specialize}, taking logarithms and using the definitions of $F$ and $L_\ell^J$:
		\begin{multline}\label{eq:translate}
			\sum_{v\in S}\sum_{j=1}^{k}\log \dfrac{||\mathbf{x}(\alpha)||_v^{d}}{||\tilde{Q}_j(\alpha)(\mathbf{x}(\alpha))||_v}
			\le (n+1)d \sum_{v\in S}\log ||\mathbf{x}(\alpha)||_v+\zeta_2(N)\sum_{v\in S}\log ||\mathbf{x}(\alpha))||_v\notag\\
			+\Big(\dfrac{d(n+1)!}{\deg V \cdot N^{n+1}}-\frac{\zeta_1(N)}{N^{n+1}}\Big)\sum_{v\in S}\log \prod_{\ell=1}^{H_N(V)}\dfrac{||F(\alpha)||_v||L_\ell^{J(v,\alpha)}(\alpha)||_v}{||L_\ell^{J(v, \alpha)}(\alpha)(\mathbf{x}(\alpha))||_v}\notag\\
			-H_V(N)\Big(\dfrac{d(n+1)!}{\deg V \cdot N^{n+1}}-\frac{\zeta_1(N)}{N^{n+1}}\Big)\sum_{v\in S}\log ||F(\alpha)||_v\notag
			+o(h(\mathbf{x}(\alpha))).
		\end{multline}
		
		Rearranging and using \eqref{eq:sst-F}, Lemma \ref{lem:height-F}, and Lemma \ref{lem:height-L} we can obtain
		\begin{align}
			\sum_{v\in S}\sum_{j=1}^{k}\log \dfrac{||\mathbf{x}(\alpha)||_v^{d}||\tilde{Q}_j(\alpha)||_v}{||\tilde{Q}_j(\alpha)(\mathbf{x}(\alpha))||_v}&\le  (n+1)dh(\mathbf{x}(\alpha))+\zeta_2(N)h(\mathbf{x}(\alpha))\notag\\
			&+\epsilon\Big(\dfrac{d(n+1)!}{\deg V \cdot N^{n+1}}-\frac{\zeta_1(N)}{N^{n+1}}\Big)h(F(\alpha))+o(h(\mathbf{x}(\alpha))).
		\end{align}
		For moving hypersurfaces in general position, the inequality implies
		\begin{equation}\label{eq:estimate-n}
			\sum_{v \in S} \sum_{i=1}^k \frac{\lambda_{\tilde{Q}_{i}(\alpha),v}(\mathbf{x}(\alpha))}{d_i} \leq (n+1 + \epsilon) h(\mathbf{x}(\alpha)) + o(h(\mathbf{x}(\alpha))).
		\end{equation}

		Applying Lemma \ref{generalChebyshev} and using (\ref{cheb_application}), \eqref{eq:estimate-n}:
		\[
			\sum_{v\in S}\left(\max_{j} \frac{\sum_{j'=1} ^{j} c_{i_{j'}(v,\alpha)}}{b_{j(v,\alpha)}}\right) \sum_{s=1}^{l_v(\alpha)} (b_{s(v,\alpha)}-b_{s-1(v,\alpha)})\frac{\lambda_{\tilde Q_{I_{s}(v,\alpha)}}(\mathbf{x}(\alpha))}{d_{I_s}}\]
		\[\leq \max_{1 \leq j \leq q} \frac{\sum_{s=1}^j c_{i_s(v,\alpha)}}{b_{j(v,\alpha)}} \cdot (n+1+\epsilon) h(\mathbf{x}(\alpha)) + o(h(\mathbf{x}(\alpha))).
		\]
		
		For the initial segment $W_{j(v,\alpha)} = \bigcap_{s=1}^j \tilde{D}_{i_s(v,\alpha)}(\alpha) \cap V$, we have:
		\[
		\frac{\sum_{s=1}^j c_{i_s(v,\alpha)}}{b_{j(v,\alpha)}} = \frac{\alpha_v(W_{j(v,\alpha)})}{\mathrm{codim}\,W_{j(v,\alpha)}} \leq \max_{\varnothing \subsetneq W \subsetneq V} \frac{\alpha_v(W)}{\mathrm{codim}\,W}.
		\]
		
		Taking the maximum over $v \in S$ and the finitely many possible orderings of $\{1, \ldots, q\}$, and summing over $v \in S$:
		\[
		\sum_{v \in S} \sum_{j=1}^q c_j \frac{\lambda_{\tilde{Q}_j(\alpha),v}(\mathbf{x}(\alpha))}{d_j} \leq \left((n+1) \max_{\substack{v \in S \\ \varnothing \subsetneq W \subsetneq V}} \frac{\alpha_v(W)}{\mathrm{codim}\,W} + \epsilon\right) h(\mathbf{x}(\alpha))
		\]
		for all $\alpha \in A'$ outside a finite union of proper closed subsets, which is avoided by the non-degeneracy hypothesis.
		
		Exponentiating and using \eqref{eq:weil-normalize} yields the desired inequality. This completes the proof of Theorem \ref{thm:main}
		\begin{align*}
			\log \prod_{v \in S} \prod_{i=1}^{q}\left(\frac{\|\mathbf{x}(\alpha)\|_{v}^{d_i}\|Q_{i}(\alpha)\|_{v}}{\|Q_{i}(\alpha)(\mathbf{x}(\alpha))\|_{v}}\right)^{c_{i}\cdot\frac{1}{d_i}}
			\leq \left((n+1)\max_{\substack{v\in S\\ \varnothing\subsetneq W\subsetneq X}}\frac{\alpha_v(W)}{\codim W}+\epsilon\right) h(\mathbf{x}(\alpha))
		\end{align*}
		holds for all $\alpha \in A$ outside finite set.
	\end{proof}

	\vskip0.2cm
	{\footnotesize \noindent
		{\sc GuanHeng Zhao}\\
		ShanDong University, People's Republic of China.\\
		\textit{E-mail}: 202200700222@mail.sdu.edu.cn}\\
	{\footnotesize \noindent
		{\sc YuXi Li}\\
		University of Macau, People's Republic of China.\\
		\textit{E-mail}: mc55457@um.edu.mo}
\end{document}